\documentclass[12pt]{article}

\usepackage{amssymb}
\usepackage{amsfonts}
\usepackage{theorem}
\usepackage{bbm}
\usepackage{graphicx}
\usepackage[leqno]{amsmath}
\usepackage{enumerate}
\usepackage{indentfirst}
\usepackage{color}

\newtheorem{thm}{Theorem}

\newtheorem{remark}{Remark}
\newtheorem{lmm}{Lemma}[section]
\newtheorem{lemma}{Lemma}[section]

\newtheorem{corollary}{Corollary}

\newenvironment{proof}[1][Proof]{\textbf{#1.} }{\hfill $\square$}

\newcommand{\E}{\mathbb{E}}
\newcommand{\R}{\mathbbm{R}}

\newcommand{\eps}{\varepsilon}

\newcommand{\mmI}{\mathbf{I}}

\newcommand{\cL}{\mathcal{L}}

\newcommand{\cA}{\mathcal{A}}

\setcounter{footnote}1

\title{Homogenization of random parabolic operators. Diffusion approximation. }
\author{
M. Kleptsyna \thanks{Universit\'e du
Maine, D\'epartement de Math\'ematiques, Laboratoire Manceau de Math\'ematiques, Avenue Olivier Messiaen, 72085 Le Mans, Cedex 9, France.\hfill\break
e-mail: {\tt marina.kleptsyna@univ-lemans.fr}\hfill\break
e-mail: {\tt alexandre.popier@univ-lemans.fr}
}
\,\,\, A. Piatnitski \thanks{Faculty of Technology, Narvik University College, Norway \  and \ Lebedev Physical Institute RAS, Moscow, Russia\hfill\break
e-mail: \ {\tt andrey@sci.lebedev.ru}
\hfill\break
\vskip -3mm
The work of the first and the third authors was partially supported by ANR STOSYMAP
}
\,\,\, and A. Popier\footnotemark[2]
}

\date{\today}

\begin{document}
\maketitle

\begin{abstract}
The paper deals with homogenization of divergence form
second order parabolic operators whose coefficients are
periodic in spatial variables and random stationary in time.
Under proper mixing assumptions, we study the limit behaviour
of the  normalized difference between solutions of the original and the homogenized problems. The asymptotic behaviour of this
difference depends crucially on the ratio between spatial
and temporal scaling factors. Here we study the case of self-similar parabolic diffusion scaling.

\end{abstract}


\section{Introduction}

The goal of this paper is to characterize the rate of convergence in the homogenization problem for a second
order divergence form parabolic operator with random stationary in time and periodic in spatial variables
coefficients. We are also aimed at describing the limit behaviour of a normalized difference between
solutions of the original and homogenized problems

To avoid boundary effects we study a Cauchy problem that takes the form
\begin{equation}\label{ory_cauchy}
\begin{array}{c}
\displaystyle
\partial_t u^\eps= \mathrm{div}\Big(a\big(\frac{x}{\eps},\frac{t}{\eps^\alpha}\big)\nabla u^\eps\Big),
\qquad x\in\mathbb R^n, \ t>0,
\\[3mm]
\displaystyle
u^\eps(x,0)=g(x).
\end{array}
\end{equation}
with $\alpha>0$. In this paper we consider the case $\alpha=2$.
We assume that the matrix $a(z,s)=\{a^{ij}(z,s)\}$ is uniformly elliptic, $(0,1)^n$-periodic in $z$ variable, and random stationary ergodic in $s$. We denote $Y=(0,1)^n$ and in what follows identify $Y$-periodic function with functions define on the torus $\mathbb T^n$.

It is known (see \cite{JKO82}, \cite{KP02}) that under these assumptions
problem (\ref{ory_cauchy}) admits homogenization. More precisely, for any $g\in L^2(\mathbb R^n)$, almost surely (a.s.) solutions $u^\eps$ of problem (\ref{ory_cauchy})
converge, as $\eps\to 0$, to a solution
of the homogenized problem
\begin{equation}\label{hom_cauchy}
\begin{array}{c}
\displaystyle
\partial_t u^0=\mathrm{div}\big(a^{\rm eff}\nabla u^0\big)\\[2mm]
\displaystyle
u^0(x,0)=g(x)
\end{array}
\end{equation}
with a constant non-random coefficients.
The convergence is in $L^2(\mathbb R^n\times(0,T))$.
More detailed description of the existing homogenization results is given in Sections \ref{s_homoge} and \ref{s_diffus}.

The paper focuses on the rate of this convergence and on higher order terms of the asymptotics of $u^\eps$. Our goal is to describe the limit behaviour of the normalized difference $\eps^{-1}(u^\eps-u^0)$.

Clearly, the main oscillating term of the asymptotics of this
normalized difference should be expressed in terms of the corrector. We recall (see \cite{KP02}, \cite{DIPP06})  that
the equation
$$
\partial_s\chi(z,s)=\mathrm{div}_z\big(a(z,s)\big(\nabla_z \chi(z,s)+{\bf I}\big)\big)
$$
has a unique up to an additive (random) constant periodic in $z$ and stationary in $s$ solution. Thus, the gradient $\nabla_z\chi$ is uniquely defined. The principal corrector takes the form
$\eps\chi\big(\frac{x}{\eps},\frac{t}{\eps^2}\big)\cdot\nabla u^0(x,t)$. We study the limit behaviour of the expression
$$
U^\eps(x,t):= \frac{u^\eps(x,t)-u^0(x,t)}{\eps}- \chi \big(\frac{x}{\eps},\frac{t}{\eps^2}\big)\cdot\nabla u^0(x,t).
$$
For generic stationary ergodic coefficients $a(z,s)$ the family $\{U^\eps\}$ need not be compact or tight in $L^2(\mathbb R^n\times(0,T))$.

For this reason we assume that (see Section \ref{s_setup} for further details)
\begin{itemize}
\item Coefficients $a(z,s)$ have good mixing properties.
\item Initial function $g$ is sufficiently smooth.
\end{itemize}
Under these conditions we show (see Theorem \ref{t_main1}, Section \ref{sec6})
that $U^\eps$ converges in law
in $L^2(\mathbb R^n\times(0,T))$ equipped with the strong topology to a solution of a SPDE with constant coefficients and an additive noise. This SPDE reads
$$
\begin{array}{c}
\displaystyle
d U^0=\mathrm{div}\Big(a^{\rm eff}\nabla U^0+\mu \frac{\partial^3}{\partial x^3}u^0\Big)dt+\Lambda^{1/2}\frac{\partial^2}{\partial x^2}u^0\,dW_t,\\[4mm]
U^0(x,0)=0;
\end{array}
$$
here $a^{\rm eff}$ is the homogenized coefficients matrix, $u^0$ is a solution of (\ref{hom_cauchy}),
$W_t$ is a standard $n^2$-dimensional Wiener process, and $\mu$ and $\Lambda$ are constant tensors
which are defined in Section \ref{sec6}.
We show that this SPDE is well-posed and, thus, defines
the limit law of $U^\eps$ uniquely.

Notice that under proper choice of an additive constant the mean value of  $\chi(z,s)$ is equal to zero.
Therefore, the function $ \chi \big(\frac{x}{\eps},\frac{t}{\eps^2}\big)\nabla u^0(x,t)$ converges a.s. to zero
weakly in $L^2(\mathbb R^n\times(0,T))$, as $\eps\to 0$. Therefore, in the weak topology of $L^2(\mathbb R^n\times(0,T))$, the limit in law of the normalized difference
 $\eps^{-1}(u^\eps(x,t)-u^0(x,t))$ coincides with that of $U^\eps$.

The first results on homogenization of elliptic operators
with random statistically homogeneous coefficients were obtained in \cite{Ko78}, \cite{PaVa81}. At present there is an extensive literature on this topic. However, optimal estimates for the rate of convergence is an open issue. In \cite{Yur81} some power estimates for the rate of convergence were obtained in dimension three and more.
In the recent work \cite{GlOt12} the further important progress
has been made in this problem.

Parabolic operators with random coefficients depending both on spatial and temporal variables have been considered in \cite{JKO82}. In the case of a diffusive scaling, the a.s. homogenization theorem has been proved.

The case of non-diffusive scaling has been studied in \cite{KP97} under the assumption that the coefficients are periodic in spatial variables and random stationary in time.

It turns out that the structure of the higher order terms
of the asymptotics of $u^\eps$ depends crucially on whether
the scaling is diffusive or not. Here we study the diffusive scaling. The case of non-diffusive scaling will be addressed elsewhere.


\section{The setup} \label{s_setup}

Let $(\Omega,\mathcal{F},{\bf P})$ be a standard probability space equipped with a measure preserving ergodic dynamical system $\mathcal{T}_s$, $s\in\mathbb R$.

Given a measurable matrix function $\tilde a(z,\omega)=\{\tilde a^{ij}(z,\omega)\}_{i,j=1}^n$ which is periodic in $z$ variable with a period one in each coordinate direction, we define
a random field $a(z,s)$ by
$$
a(z,s)=\tilde a(z,\mathcal{T}_s\omega).
$$
Then $a(z,s)$ is periodic in $z$ and stationary ergodic  in $s$.

We consider the following Cauchy problem
in $\mathbb R^n\times(0,T]$, $T>0$\,:
\begin{equation} \label{eq:u^eps}
\left\{ \begin{array}{lcl} \displaystyle
\frac{\partial u^\eps}{\partial t}& = &\displaystyle \mathrm{div}\Big( a\Big( \frac{x}{\eps}, \frac{t}{\eps^2} \Big)\nabla u^\eps \Big)   \\[4mm]
u^\eps (0,x) & = & g(x)
\end{array} \right.
\end{equation}
with a small positive parameter $\eps$.

We assume that the coefficients in (\ref{eq:u^eps}) possess the following properties.
\begin{itemize}
\item[{\bf H1}]
The matrix $a(z,s)$ is symmetric and satisfies uniform ellipticity conditions that is there is $\lambda>0$ such that for all $(z,\omega)$ the following inequality holds\,:
$$
\lambda|\zeta|^2\le \tilde a(z,\omega)\xi\cdot\xi\le \lambda^{-1}|\zeta|^2\qquad \hbox{for all }\zeta\in \mathbb R^n.
$$
\item[{\bf H2}] The initial condition $g$ is four times continuously differentiable, and for any $K>0$ there is $C_K>0$ such that
$$
\sum\limits_{|{\bf  j}|=0}^4 \Big|\frac{\partial^{\bf  j}}{(\partial x^1)^{j_1}\dots (\partial x^n)^{j_n}} g(x)\Big|\le C_K(1+|x|)^{-K}
$$
for all $x\in\mathbb R^n$, and ${\bf j}=(j_1,\ldots,j_n)$
\end{itemize}
In order to formulate one more condition we introduce the so-called maximum correlation coefficient. Setting
$\mathcal{F}_{\leq r}=\sigma\{a(z,s)\,:\,s\leq r\}$ and
$\mathcal{F}_{\geq r}=\sigma\{a(z,s)\,:\,s\geq r\}$, we define
$$
\rho(r)=\sup\limits_{\xi_1,\xi_2}{\bf E}(\xi_1\xi_2)
$$
where the supremum is taken over all $\mathcal{F}_{\leq 0}$-measurable $\xi_1$ \, and $\mathcal{F}_{\geq r}$-measurable
$\xi_2$ such that ${\bf E}\xi_1={\bf E}\xi_2=0$, and  ${\bf E}\{(\xi_1)^2\}={\bf E}\{(\xi_2)^2\}=1$.  We then assume that
\begin{itemize}
\item[{\bf H3}] The function $\rho$ satisfies the estimate
$
\displaystyle \int_0^\infty\rho(r)dr<+\infty.
$
\end{itemize}

\begin{remark}
Condition {\bf H3} is somehow implicit.
In applications various sufficient conditions are often used.
In particular,  {\bf H3} is fulfilled if $\rho(r)\le cr^{-(1+\delta)}$ for some $\delta>0$.\\
\end{remark}

\begin{remark}
In an important particular case we set
$$
a(z,s)=\widetilde{\widetilde a}(z,\xi_s),
$$
where $\xi_s$ is a stationary  process with values in $\mathbb R^N$, and $\widetilde{\widetilde a}(z,y)$ satisfies the uniform ellipticity conditions
$$
\lambda|\zeta|^2\le\widetilde{\widetilde a}(z,y)\xi\cdot\xi\le \lambda^{-1}|\zeta|^2\qquad \hbox{for all }\zeta\in \mathbb R^n,\ \ (z,y)\in\mathbb Z^n
\times\mathbb R^N.
$$
If $\xi_s$ is Gaussian then condition {\bf H3} follows from integrability of the correlation function of $\xi_\cdot$.\\
If  $\xi_s$ is a diffusion process, then  condition  {\bf H3} can be replaced with some conditions on the generator
of $\xi_s$. This case is considered in Sections \ref{s_diffus} and \ref{sec7_dif}.
\end{remark}

\section{Homogenization results}\label{s_homoge}

In this section we remind of the existing homogenization results for problem (\ref{ory_cauchy}).  Although we only deal in this paper with the case $\alpha=2$, for convenience of the reader we formulate the homogenization results for all $\alpha>0$.  To this end we first introduce the so-called cell problem.
For $\alpha=2$ it reads
\begin{equation} \label{aux_alp2}
\partial_s\chi(z,s)=\mathrm{div}\big(a(z,s)({\bf I}+\nabla\chi(z,s)\big),\qquad (z,s)\in\mathbb T^n \times(-\infty,+\infty)
\end{equation}
with $\mmI$ being the unit matrix; here $\chi=\{\chi^j\}_{j=1}^n$ is a vector function. In what follows for the sake of brevity we denote
$\mathrm{div}a=\mathrm{div}(a{\bf I})=\frac{\partial}{\partial z^i}a^{ij}(z)$.  Also, we assume summation over repeated indices.

According to Lemma \ref{l_exist_stat}, under assumption {\bf H1} this equation has a stationary periodic in $y$ vector-valued solution. This solution is unique up to an additive constant.
We define
\begin{equation}\label{aeff-ravno}
a^{\rm eff}=\E \int\limits_{\mathbb T^n}a(z,s)\big(\mmI+\nabla\chi(z,s)\big)\,dz
\end{equation}
Notice that due to stationarity the expression
on the right-hand side does not depend on $s$.

If $\alpha<2$, the cell problem reads
\begin{equation} \label{aux_alp<}
\mathrm{div}\big(a(z,s)({\bf I}+\nabla\chi_-(z,s)\big)=0,\qquad z\in \mathbb T^n;
\end{equation}
here $s$ is a parameter. This equation has a unique up to a multiplicative constant solution. We then set
\begin{equation} \label{aeff-menshe}
a_{-}^{\rm eff}=\E \int\limits_{\mathbb T^n}a(z,s)\big(\mmI+\nabla\chi_-(z,s)\big)\,dz.
\end{equation}

For $\alpha>2$ we first define $\overline a(z)=\E a(z,s)$,
then introduce a deterministic function $\chi_+(z)$
as a periodic solution to the problem
\begin{equation} \label{aux_alp>}
\mathrm{div}\big(\overline a(z)({\bf I}+\nabla\chi_+(z)\big)=0,\qquad z\in \mathbb T^n,
\end{equation}
and finally define
\begin{equation} \label{aeff-bolshe}
a_{+}^{\rm eff}=\int\limits_{\mathbb T^n}\overline a(z)\big(\mmI+\nabla\chi_+(z)\big)\,dz.
\end{equation}

The following statement has been obtained in \cite{JKO82}
and \cite{DIPP06}.

\begin{thm}
Let $g\in L^2(\mathbb R^n)$, and assume that condition {\bf H1} holds.
If $\alpha=2$, then a solution $u^\eps$ of problem (\ref{ory_cauchy}) converges a.s. in $L^2(\mathbb R^n\times(0,T))$ to a solution of the limit problem (\ref{hom_cauchy}) with $a^{\rm eff}$ given by (\ref{aeff-ravno}).

If $\alpha<2$, then a solution $u^\eps$ of problem (\ref{ory_cauchy}) converges in probability in $L^2(\mathbb R^n\times(0,T))$ to a solution of the limit problem (\ref{hom_cauchy}) with $a^{\rm eff}=a_-^{\rm eff}$ defined in (\ref{aeff-menshe}).

If $\alpha>2$, then a solution $u^\eps$ of problem (\ref{ory_cauchy}) converges in probability in $L^2(\mathbb R^n\times(0,T))$ to a solution of the limit problem (\ref{hom_cauchy}) with $a^{\rm eff}=a_+^{\rm eff}$ defined in (\ref{aeff-bolshe}).
\end{thm}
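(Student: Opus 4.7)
The plan is to apply the method of oscillating test functions (Tartar's method), with the choice of corrector dictated by the scaling exponent $\alpha$. The starting point in every case is the standard parabolic energy estimate for (\ref{ory_cauchy}): testing against $u^\eps$ and using the uniform ellipticity in {\bf H1} gives
$$
\sup_{t\le T}\|u^\eps(\cdot,t)\|_{L^2(\R^n)}^2+\lambda\int_0^T\|\nabla u^\eps\|_{L^2(\R^n)}^2\,dt\le \|g\|_{L^2(\R^n)}^2,
$$
so along a subsequence $u^\eps\rightharpoonup u^0$ weakly in $L^2(0,T;H^1(\R^n))$ and weakly-$\ast$ in $L^\infty(0,T;L^2(\R^n))$. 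The remaining task is to identify the limiting flux in each regime.

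For $\alpha=2$ I use the stationary periodic corrector $\chi(z,s)$ solving the parabolic cell problem (\ref{aux_alp2}), whose existence and uniqueness up to an additive constant is provided by Lemma \ref{l_exist_stat}. Testing (\ref{ory_cauchy}) against $\phi^\eps(x,t)=\phi(x,t)+\eps\chi(x/\eps,t/\eps^2)\cdot\nabla\phi(x,t)$ with $\phi\in C_c^\infty(\R^n\times(0,T))$ and expanding in powers of $\eps$, the formally singular $\eps^{-1}$ contribution cancels exactly by virtue of (\ref{aux_alp2}); the surviving $O(1)$ coefficient of $\partial^2_{x^ix^k}\phi$ is $a^{ij}(\delta_{jk}+\partial_{z^j}\chi^k)$ sampled at $(x/\eps,t/\eps^2)$, up to a pure $z$-divergence that averages to zero by periodicity. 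Birkhoff's ergodic theorem applied to the $Y$-periodic, stationary ergodic field $a(\mmI+\nabla_z\chi)$ with the joint space-time scaling yields almost-sure convergence of this oscillating coefficient to the constant tensor $a^{\rm eff}$ of (\ref{aeff-ravno}), and uniqueness of the limit problem (\ref{hom_cauchy}) upgrades the subsequential convergence to full a.s.\ convergence of $u^\eps$.

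For $\alpha\neq 2$ the two scales decouple in the limit. When $\alpha<2$ the slow argument $t/\eps^\alpha$ is effectively frozen on the spatial oscillation time-scale, so the correct corrector is the elliptic $\chi_-(z,s)$ of (\ref{aux_alp<}) with $s$ treated as a parameter; the oscillating test function $\phi+\eps\chi_-(x/\eps,t/\eps^\alpha)\cdot\nabla\phi$ then reduces the flux to $a(\mmI+\nabla_z\chi_-)$ evaluated at $(x/\eps,t/\eps^\alpha)$, whose cell average is a stationary $s$-dependent tensor that equilibrates in probability to $a_-^{\rm eff}$. When $\alpha>2$ the fast temporal variable is averaged first, via a stochastic law of large numbers, replacing $a(x/\eps,t/\eps^\alpha)$ by the deterministic $\overline a(x/\eps)$; classical periodic homogenization with $\chi_+$ from (\ref{aux_alp>}) then identifies $a_+^{\rm eff}$. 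In both cases the additional stochastic averaging in the random time variable degrades the mode of convergence from a.s.\ to in probability.

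The main obstacle in each regime is passing to the limit in the product $a(x/\eps,t/\eps^\alpha)\nabla u^\eps$ of two merely weakly convergent sequences, and this is what forces the introduction of the corrector above. For $\alpha=2$ the joint ergodic averaging in $z$ and $s$ is legitimate precisely because $a$ is simultaneously periodic and stationary and the cell equation (\ref{aux_alp2}) couples the two scales; the delicate point is to verify the uniform integrability needed to convert pointwise ergodic convergence into $L^2_{\rm loc}$ convergence against $\nabla u^\eps$. For $\alpha\neq 2$ the technical difficulty is instead justifying the exchange of the spatial and temporal averagings, which is exactly what degrades the mode of convergence. The full arguments are those of \cite{JKO82} and \cite{DIPP06}; the statement is recalled here only to fix the homogenized problem that enters the refined asymptotics developed in the sequel.
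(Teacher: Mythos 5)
The paper contains no proof of this theorem: it is quoted from \cite{JKO82} and \cite{DIPP06}, so there is no internal argument to compare yours against line by line. Your overall strategy (parabolic energy estimate, oscillating test functions, ergodic identification of the limiting flux, uniqueness of the limit problem to remove the subsequence) is the standard and essentially correct route, and your closing acknowledgement that the complete arguments live in the cited references is appropriate for how the paper uses this statement.

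There is, however, one concrete error in your sketch for $\alpha=2$: the singular terms do \emph{not} cancel if you build the test function from the forward corrector $\chi$ of (\ref{aux_alp2}). In the weak formulation the time derivative falls on the test function, so with $\phi^\eps=\phi+\eps\chi(x/\eps,t/\eps^2)\cdot\nabla\phi$ the order-$\eps^{-1}$ contribution is
$$
-\,\eps^{-1}\int_0^T\!\!\int_{\mathbb R^n}u^\eps\,\Big[\partial_s\chi+\mathrm{div}_z\big(a(\mmI+\nabla_z\chi)\big)\Big]\Big(\frac{x}{\eps},\frac{t}{\eps^2}\Big)\cdot\nabla\phi\,dx\,dt,
$$
and when $\chi$ solves $\partial_s\chi=\mathrm{div}_z\big(a(\mmI+\nabla_z\chi)\big)$ the bracket equals $2\,\mathrm{div}_z\big(a(\mmI+\nabla_z\chi)\big)$, which does not vanish. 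The cancellation requires the time-reversed corrector $\chi_-$ of (\ref{inv_aux}), for which $\partial_s\chi_-+\mathrm{div}_z\big(a(\mmI+\nabla_z\chi_-)\big)=0$. This is exactly the choice the paper itself makes in its oscillating-test-function computation in the proof of Lemma \ref{l_stochpde}, where it observes that ``due to equation (\ref{inv_aux}) all the terms of order $\eps^{-1}$ are equal to zero''; one then recovers the matrix (\ref{aeff-ravno}) only via the paper's remark showing that (\ref{aeff-ravno}) and (\ref{aeff-ravno-bis}) coincide. As written, your argument invokes the wrong cell problem at the decisive step, and the $O(1)$ bookkeeping that is supposed to produce $a^{\rm eff}$ does not go through until the corrector is replaced by $\chi_-$ and the equivalence of the two definitions of the effective matrix is established.
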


\begin{remark}{\rm
An alternative way of defining the effective matrix $a^{\rm eff}$ is related to the operator with reversed time.
We define $\chi_-$ as a stationary solution of the problem
\begin{equation} \label{inv_aux}
-\partial_s\chi_-(z,s)=\mathrm{div}\big(a(z,s)({\bf I}+\nabla\chi_-(z,s)\big),\qquad (z,s)\in (-\infty,+\infty)\times\mathbb T^n
\end{equation}
and set
\begin{equation}\label{aeff-ravno-bis}
a^{\rm eff}=\E \int\limits_{\mathbb T^n}a(z,s)\big(\mmI+\nabla\chi_-(z,s)\big)\,dz
\end{equation}
In order to show that  (\ref{aeff-ravno-bis}) and  (\ref{aeff-ravno}) define the same effective matrix, we multiply the $i$-th component of equation (\ref{inv_aux}) by $\chi^j$,  and the  $j$-th component of equation (\ref{aux_alp2}) by $\chi^i$ and  integrate the resulting relations over $\mathbb T^n\times (0,1)$. Subtracting the second relation from the first one, taking the expectation and considering the symmetry of  effective matrix, we obtain the desired
equality.
}
\end{remark}

\subsection{Diffusive dependence of time}\label{s_diffus}

In this section as a particular case of (\ref{eq:u^eps}) we introduce the following problem
\begin{equation} \label{eq:u^eps_bis}
\left\{ \begin{array}{lcl}
\displaystyle
\frac{\partial u^\eps}{\partial t}& = &\displaystyle \mathrm{div}\Big( \tilde a\Big( \frac{x}{\eps}, \xi_{\frac{t}{\eps^2}} \Big) u^\eps \Big)
\\[4mm]
u^\eps (0,x) & = & g(x)
\end{array} \right.
\end{equation}
with a diffusion process $\xi_s$,  $s\in(-\infty,+\infty)$,  with values in $\mathbb R^N$ or on a compact manifold.  This process is defined on a probability space $(\Omega, \mathcal{F},\mathbb{P})$. For the sake of definiteness
we consider here the case of a diffusion in  $\mathbb R^N$.    The corresponding It\^o equation
reads
$$
d\xi_t = b(\xi_t) dt +\sigma(\xi_t) dW_t,
$$
here $W_\cdot$ stands for a standard $N$-dimensional Wiener process. The infinitesimal generator of $\xi$ is denoted by
$\cal L$:
$$
\cL f(y)= q^{ij}(y) \frac{\partial^2}{\partial y^i\partial y^j}f(y) + b(y)\cdot\nabla f(y), \qquad y \in \R^N,
$$
with a $N\times N$ matrix $q(y)=\frac{1}{2} \sigma(y)\sigma^*(y)$.
We also introduce an operator
$$
\cA f(x) = \mathrm{div}_x \left( a\left( x, y \right) \nabla_x  f \right);
$$
here $y$ is a parameter.
Applied to a function $f(z,y)$, $\cL$ acts on the function $y \mapsto f(z,y)$ for $z$ fixed, and $\cA$ acts on the function $z \mapsto f(z,y)$ for $y$ fixed.



In the diffusive case condition  {\bf H3} can be replaced with certain assumptions on the generator $\cL$.
More precisely,
we suppose that the following conditions hold true.

\begin{enumerate}
\item[{\bf A1}.] The coefficients $a$ and $q$ are uniformly bounded as well as their first order derivatives in all variables:
$$|a(z,y)| + |\nabla_z a(z,y)| + |\nabla_y a(z,y)| \leq C_1,$$
$$|q(y)| + |\nabla q(y)| \leq C_1.$$
The function $b$ as well as its derivatives satisfy polynomial growth condition:
$$|b(y)| + |\nabla b(y)| \leq C_1(1+|y|)^{N_1}.$$

\item[{\bf A2}.] Both $\cA$ and $\cL$ are uniformly elliptic:
$$
C_2 \mathbf{I}\leq a(z,y) , \quad C_2\mathbf{I} \leq q(y) , \quad \mbox{with } C_2 > 0,
$$
where $\mathbf{I}$ stands for a unit matrix of the corresponding dimension.
\item[{\bf A3}.] There exist $N_2 > -1$, $R > 0$ and $C_3 > 0$ such that
$$b(y) \frac{y}{|y|} \leq -C_3|y|^{N_2}$$
for all $y$, $|y| > R$.
\end{enumerate}
Under above assumptions the process $\xi$ has a unique invariant probability measure (see \cite{PV}). This measure possesses a smooth density $\pi$ that forms the kernel of the formal adjoint operator $\cL^*$ of $\cL$.  We assume that $\xi_s$ is stationary. Then
$$
\E{ f(z,\xi_s) } = \int_{\R^N} f(z,y) \pi(y) dy
$$
 
\begin{remark}
Notice that conditions {\bf A1}--{\bf A3} need not imply condition ${\bf H3}$.
In general, mixing properties that follow from {\bf A1}--{\bf A3} are weaker than those stated by ${\bf H3}$. However, in the diffusive case  these conditions are sufficient
for the CLT type results used in the proofs below. This makes the diffusive case interesting. It should also be noted that
in this case the conditions  are given in terms of the process generator, which might be more comfortable in applications.
\end{remark}

\noindent
Let us recall the result of
\cite{KP97} (see also
\cite{CKP01}).
\begin{thm} \label{thm:KP_95}
Under Assumptions {\bf A1}--{\bf A3}, the solution $u^\eps$ of \eqref{eq:u^eps_bis} converges almost surely
in the space $L^2((0,T)\times\mathbb R^ n)$ to the solution of problem (\ref{hom_cauchy})
with
\begin{equation} \label{eq:a^al<2}
a^{\rm eff} = \int_{\mathbb R^N}\int_{\mathbb T^n} a({\bf I}+\nabla_z\chi^0)\pi(y)\, dzdy
\end{equation}
and $\chi^0$ being the solution of the following equation
\begin{equation} \label{chi0_al=2}
(\cA +  \cL) \chi^0  = - \mathrm{div}_z{a}(z,y).
\end{equation}
\end{thm}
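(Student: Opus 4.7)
The plan is to follow the standard two-scale homogenization ansatz, adapted to the fact that here the fast variable is driven by a stationary diffusion $\xi_s$ so that the generator of the fast dynamics is $\cA+\cL$ rather than a single spatial divergence operator.

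First I would establish existence, uniqueness, and regularity of the corrector $\chi^0(z,y)$ solving \eqref{chi0_al=2} on $\mathbb T^n\times\mathbb R^N$. Under {\bf A1}--{\bf A3}, the process $(x/\eps\bmod 1,\xi)$ on $\mathbb T^n\times\mathbb R^N$ is ergodic with invariant measure $dz\otimes\pi(y)dy$, and $\cA+\cL$ is (up to sign) its generator with a spectral gap on the orthogonal complement of constants. Because $\int_{\mathbb T^n}\mathrm{div}_z a(z,y)\,dz=0$ for each $y$, the source term $-\mathrm{div}_z a$ is centered with respect to the invariant measure, so the Fredholm alternative produces a unique centered solution $\chi^0$. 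Polynomial weighted estimates on $\chi^0$ and $\nabla_z\chi^0$ follow from {\bf A1}--{\bf A3} by standard Poisson equation estimates for hypoelliptic diffusions of Pardoux--Veretennikov type.

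Second, I would insert the formal asymptotic expansion
\[
u^\eps(x,t)=u^0(x,t)+\eps\,\chi^0\!\Big(\frac{x}{\eps},\xi_{t/\eps^2}\Big)\!\cdot\!\nabla u^0(x,t)+\eps^2 u^{(2)}\!\Big(x,t,\frac{x}{\eps},\xi_{t/\eps^2}\Big)
\]
into \eqref{eq:u^eps_bis}. Using $\partial_t F(x/\eps,\xi_{t/\eps^2})=\eps^{-2}\cL F+(\text{martingale increments})$ (It\^o's formula in the fast time) together with the identity $\partial_t-\mathrm{div}(a\,\nabla)=\eps^{-2}(\cL+\cA)-\eps^{-1}(\text{cross terms})+\ldots$, the $\eps^{-1}$-terms cancel exactly by the corrector equation \eqref{chi0_al=2}. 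Averaging the $\eps^0$-terms against $dz\otimes\pi(y)dy$, the centering condition that determines $u^{(2)}$ yields the homogenized equation $\partial_t u^0=\mathrm{div}(a^{\rm eff}\nabla u^0)$ with $a^{\rm eff}$ exactly as in \eqref{eq:a^al<2}.

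Third, I would make this rigorous by an energy/compactness argument. Set
\[
r^\eps(x,t)=u^\eps(x,t)-u^0(x,t)-\eps\,\chi^0\!\Big(\frac{x}{\eps},\xi_{t/\eps^2}\Big)\!\cdot\!\nabla u^0(x,t).
\]
By It\^o's formula applied to the corrector term, $r^\eps$ satisfies a parabolic equation of the form $\partial_t r^\eps=\mathrm{div}(a(\cdot,\cdot)\nabla r^\eps)+R^\eps_1+\mathrm{div}\,R^\eps_2+dM^\eps_t$, where $R^\eps_1,R^\eps_2$ collect the residual terms of order $\eps$ involving $\chi^0,\nabla_z\chi^0$ and spatial derivatives of $u^0$, and $dM^\eps_t$ is the martingale part coming from the It\^o differential of $\chi^0(x/\eps,\xi_{t/\eps^2})$. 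Uniform $L^2$ energy estimates together with uniform ellipticity of $a$ reduce the proof of $r^\eps\to 0$ in $L^2((0,T)\times\mathbb R^n)$ to showing that $R^\eps_1$, $R^\eps_2$, and the quadratic variation of $M^\eps$ tend to zero in suitable norms. These in turn follow from smoothness of $g$ (hence of $u^0$), the weighted bounds on $\chi^0$, and an ergodic theorem for the process $\xi$ applied to continuous bounded functionals of $(x/\eps,\xi_{t/\eps^2})$.

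The main obstacle is the last step: passing from convergence in probability (or in mean) to the almost sure convergence claimed in the theorem. This requires exploiting the strong mixing encoded in {\bf A1}--{\bf A3} (spectral gap of $\cL$ on the weighted $L^2$ with density $\pi$, plus sub-exponential moment bounds on $\xi$ under {\bf A3}) to control the oscillating integrals $\int_0^{T/\eps^2}\!F(\xi_s)\,ds$ by a Borel--Cantelli argument along $\eps=1/k$, and to transfer the resulting a.s.\ estimates along the discrete sequence to the continuous parameter by the uniform energy bound on $u^\eps$.
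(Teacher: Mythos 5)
First, a point of comparison: the paper does not prove Theorem \ref{thm:KP_95} at all --- it is recalled from \cite{KP97} and \cite{CKP01} --- so your sketch can only be judged on its own terms. Its overall architecture (corrector via a Poisson equation for $\cA+\cL$ using the Pardoux--Veretennikov theory, two-scale expansion, energy estimates for the remainder) is the right one and matches the cited literature, though the claim of a ``spectral gap'' is an overstatement: under {\bf A3} with $N_2>-1$ one only gets polynomial decay in weighted spaces (compare the paper's Lemma \ref{l_mixi_dif}), which is still enough to solve \eqref{chi0_al=2}.

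There is, however, one concrete step that fails as written: the claimed cancellation of the $\eps^{-1}$-terms. If you correct $u^\eps$ directly by $\eps\,\chi^0(x/\eps,\xi_{t/\eps^2})\cdot\nabla u^0$ and apply It\^o's formula, the time derivative contributes $+\eps^{-1}\cL\chi^0\cdot\nabla u^0$ to the drift, while $-\mathrm{div}(a\nabla\,\cdot\,)$ contributes $-\eps^{-1}(\cA\chi^0+\mathrm{div}_z a)\cdot\nabla u^0$; the sum vanishes only if $(\cA-\cL)\chi^0=-\mathrm{div}_z a$, not if $\chi^0$ solves \eqref{chi0_al=2}. With the corrector of \eqref{chi0_al=2} your expansion leaves an uncancelled singular term $2\eps^{-1}\cL\chi^0\cdot\nabla u^0$, so the remainder $r^\eps$ does not close. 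The equation $(\cA+\cL)\chi^0=-\mathrm{div}_z a$ is the one adapted to the \emph{adjoint} (test-function) formulation, where the It\^o drift of the corrected test function and the operator $\cA$ enter with the same sign --- this is exactly the forward/backward dichotomy of the paper's Remark~3 and equations \eqref{aux_alp2} versus \eqref{inv_aux}. The fix is standard (either run the expansion on the test function, or define the corrector with $\cA-\cL$ and invoke the symmetrization argument of Remark~3 to see that \eqref{eq:a^al<2} is unchanged), but your write-up conflates the two conventions. A second, smaller gap: the It\^o differential of $\eps\,\chi^0(x/\eps,\xi_{t/\eps^2})$ produces an $O(1)$ martingale term $(\nabla_y\chi^0)\sigma\,d\widetilde W_t\cdot\nabla u^0$ whose quadratic variation is \emph{not} small pointwise; it is killed only because $\int_{\mathbb T^n}\nabla_y\chi^0(z,y)\,dz=0$ (which itself needs the normalization $\int_{\mathbb T^n}\chi^0(z,y)\,dz\equiv 0$, obtained by integrating \eqref{chi0_al=2} over the torus and using ergodicity of $\cL$) together with weak convergence of periodic oscillations. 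You should make this cancellation explicit rather than subsuming it under ``an ergodic theorem.''
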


\section{Technical statements}\label{s_tech}

In this section we provide a number of technical statements required
for formulating and proving the main results.

Consider an equation
\begin{equation}\label{aux_para}
\partial_s\psi(z,s)-\mathrm{div}\big(a(z,s)\nabla\psi(z,s)\big)=\phi(z,s)
\end{equation}
with a stationary in $s$ and periodic in $z$ random function $\phi$.
\begin{lmm}\label{l_exist_stat}
Let $\phi\in L^2_{\rm loc}(\mathbb R;H^{-1}(\mathbb T^n))$, and assume that $\|\phi\|^2_{L^2((0,1);H^{-1}(\mathbb T^n))}\leq C$ with a non-random constant $C$. Assume, moreover, that
\begin{equation}\label{centre_rhs}
\int_{\mathbb T^n}\phi(z,s)dz=0\qquad \hbox{a.s.}
\end{equation}
Then equation (\ref{aux_para}) has a stationary solution $\psi\in L_{\rm loc}^\infty((-\infty,+\infty);L^2(\mathbb T^n))\cap
L_{\rm loc}^2((-\infty,+\infty);H^1(\mathbb T^n))$. It is unique up to an additive (random) constant,
and
\begin{equation}\label{aux_bounn}
\|\psi\|^2_{L^\infty(\mathbb R;L^{2}(\mathbb T^n))}\leq C_1,\qquad
\|\psi\|^2_{L^2((0,1);H^{1}(\mathbb T^n))}\leq C_1.
\end{equation}
\end{lmm}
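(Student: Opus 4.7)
The plan is to construct $\psi$ as a limit of Cauchy problems posed on expanding time intervals $(-T,\infty)$ with zero initial data at $s=-T$, read off stationarity from an evolution-family representation, and establish uniqueness via an energy identity for the homogeneous equation.

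For each $T>0$, standard parabolic theory gives a unique weak solution $\psi^T$ of (\ref{aux_para}) on $(-T,\infty)$ vanishing at $s=-T$. The key observation is that the mean-zero condition (\ref{centre_rhs}) is preserved by the flow: integrating the equation over $\mathbb T^n$ kills the divergence term by periodicity, so $\int_{\mathbb T^n}\psi^T(z,s)\,dz\equiv 0$, which lets us apply Poincar\'e's inequality on the torus at every time. Testing (\ref{aux_para}) against $\psi^T$, using uniform ellipticity ({\bf H1}), the duality bound $\langle\phi,\psi^T\rangle\leq\|\phi\|_{H^{-1}}\|\nabla\psi^T\|_{L^2}$, Young's inequality, and Poincar\'e, produces
\begin{equation*}
\frac{d}{ds}\|\psi^T(s)\|_{L^2(\mathbb T^n)}^2 + c\lambda\,\|\psi^T(s)\|_{L^2(\mathbb T^n)}^2 \leq C\|\phi(s)\|_{H^{-1}(\mathbb T^n)}^2.
\end{equation*}
Gronwall yields $\|\psi^T(s)\|_{L^2}^2 \leq C\int_{-T}^{s}e^{-c\lambda(s-r)}\|\phi(r)\|_{H^{-1}}^2\,dr$, and integrating the differential inequality over a unit interval gives the matching $L^2H^1$ estimate. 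Stationarity of $\phi$ together with the hypothesis $\|\phi\|_{L^2((0,1);H^{-1})}^2\leq C$ make these bounds uniform in $T$ almost surely, delivering both inequalities in (\ref{aux_bounn}).

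A weak-$*$ compactness argument extracts a subsequence $\psi^{T_k}$ converging to some $\psi$ that still solves (\ref{aux_para}) and satisfies (\ref{aux_bounn}). To obtain stationarity, rewrite $\psi^T(s)=\int_{-T}^{s}U(s,r)\phi(\cdot,r)\,dr$ with $U(s,r)$ the evolution family associated to $\partial_s-\mathrm{div}(a\nabla\cdot)$, and pass to the limit to obtain $\psi(s)=\int_{-\infty}^{s}U(s,r)\phi(\cdot,r)\,dr$. Because $a$ and $\phi$ are jointly stationary under $\mathcal{T}_h$ and the evolution family is covariant, $U(s+h,r+h)[\mathcal{T}_h\omega]=U(s,r)[\omega]$, this representation is manifestly stationary. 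For uniqueness, set $\bar\psi=\psi_1-\psi_2$ for two stationary solutions: $\bar\psi$ satisfies the homogeneous equation, so stationarity forces $\frac{d}{ds}\E\|\bar\psi(s)\|_{L^2}^2=0$, whereas the energy identity gives $\frac{d}{ds}\E\|\bar\psi\|_{L^2}^2=-2\E\int a\nabla\bar\psi\cdot\nabla\bar\psi\,dz$. Uniform ellipticity then forces $\nabla\bar\psi\equiv 0$ almost surely, so $\bar\psi$ depends only on $s$, and $\partial_s\bar\psi=\mathrm{div}(a\nabla\bar\psi)=0$ shows $\bar\psi$ is a random constant.

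The main obstacle is the rigorous justification of stationarity of the limit: the pathwise convergence $\psi^{T_k}\to\psi$ must be compatible with the deterministic time shift on the underlying skew-product, so that the joint stationarity of $(a,\phi)$ transports to $\psi$. The integral representation above is the natural way to bypass this concern, but verifying covariance of $U(s,r)$ under $\mathcal{T}_h$ and the legitimacy of sending $T\to\infty$ inside the integral requires some care, relying on the exponential decay built into the Gronwall estimate.
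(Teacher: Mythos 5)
Your proof is correct, but it reaches the key exponential decay by a different and more elementary route than the paper. The paper works with the Green function $\mathcal{G}(z,z_0,s,s_0)$ of (\ref{aux_para}) on the torus and invokes the Harnack inequality and the maximum principle to get the pointwise-in-kernel estimate (\ref{green_est1}), $\|\mathcal{G}(\cdot,z_0,\cdot,s_0)-1\|_{L^2((s,s+1);H^1(\mathbb T^n))}\leq Ce^{-\nu(s-s_0)}$; the stationary solution is then written directly as $\psi(z,s)=\int_{-\infty}^s\int_{\mathbb T^n}\mathcal{G}(z,\hat z,s,\hat s)\phi(\hat z,\hat s)\,d\hat z\,d\hat s$, split at $\hat s=s-1$, with the far past controlled by (\ref{green_est1}) (after subtracting $1$, which is where (\ref{centre_rhs}) enters) and the recent past by the energy inequality. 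You instead obtain the same exponential decay at the level of the solution via mean-zero preservation, Poincar\'e on the torus, and Gronwall, and then pass to the limit from truncated Cauchy problems; your stationarity argument through the covariant evolution family is the same integral representation as the paper's, just written abstractly. What your approach buys is self-containedness: no Nash--Harnack machinery is needed, and the $L^2$ spectral gap suffices for (\ref{aux_bounn}). What the paper's approach buys is reusability: the kernel estimate (\ref{green_est1}) is exactly what is needed later for the $L^\infty$ bound of Corollary \ref{cor_t1} and for the mixing estimate in Lemma \ref{l_mixi}, neither of which follows from the $L^2$ energy decay alone. You also supply an explicit uniqueness argument (stationarity of $\E\|\bar\psi\|^2$ against the energy dissipation identity), which the paper delegates to \cite{KP02}; note only that this argument presupposes $\E\|\bar\psi(s)\|^2_{L^2(\mathbb T^n)}<\infty$, which is guaranteed within the class of solutions satisfying the deterministic bounds (\ref{aux_bounn}), so the uniqueness you prove is uniqueness in that class.
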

\begin{proof}
Since a proof of this statement is similar to that of Lemmata 2 and 4  in \cite{KP02}, we provide here only
a sketch of the proof. Consider the Green function of (\ref{aux_para}). It solves  a Cauchy problem
$$
\begin{array}{c}
\displaystyle
\partial_s\mathcal{G}(z,z_0,s,s_0)-\mathrm{div}\big(a(z,s)\nabla\mathcal{G}(z,z_0,s,s_0)\big)=0,\qquad z\in\mathbb T^n,\ s\geq s_0,\\[2mm]
\displaystyle
\mathcal{G}(z,z_0,s_0,s_0)=\delta(z-z_0).
\end{array}
$$
From the Harnack inequality and maximum principle it easily follows (see \cite{KP02}) that for all $s\geq s_0+1$
\begin{equation}\label{green_est1}
\|\mathcal{G}(\cdot,z_0,\cdot,s_0)-1\|_{L^2((s,s+1);H^1(\mathbb T^n))}\leq Ce^{-\nu(s-s_0)}
\end{equation}
with deterministic  constants $C$ and $\nu>0$. Then we have
$$
\psi(z,s)=\int\limits_{-\infty}^s\int_{\mathbb T^n}\mathcal{G}(z,\hat z,s,\hat s)\phi(\hat z,\hat s)\,d\hat zd\hat s=
$$
$$
=\int\limits_{-\infty}^{s-1}\int\limits_{\mathbb T^n}\big(\mathcal{G}(z,\hat z,s,\hat s)-1\big)\phi(\hat z,\hat s)\,d\hat zd\hat s
+\int\limits^{s}_{s-1}\int\limits_{\mathbb T^n}\big(\mathcal{G}(z,\hat z,s,\hat s)\big)\phi(\hat z,\hat s)\,d\hat zd\hat s,
$$
here we have also used (\ref{centre_rhs}).   The first term on the right-hand side  can be estimated with the help of
(\ref{green_est1}), the second one by means of the standard energy inequality. This yields the first bound in   (\ref{aux_bounn}).
 By construction, $\psi(z,s)$ is a stationary solution of (\ref{aux_para}).
The second bound  in   (\ref{aux_bounn}) readily follows from the first one.
\end{proof}

\begin{corollary}\label{cor_t1}
If the function $\phi$ in (\ref{aux_para}) belongs to $L^\infty(\mathbb R;W^{-1,\infty}(\mathbb T^n))$, then
$\psi\in L^\infty(\mathbb R\times \mathbb T^n)$ and
$$
\|\psi\|_{L^\infty(\mathbb R\times \mathbb T^n)}\le C \|\phi\|_{L^\infty(\mathbb R;W^{-1,\infty}(\mathbb T^n))}
$$
with a deterministic constant $C$.
\end{corollary}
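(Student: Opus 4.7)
The plan is to use the Green function representation of $\psi$ derived in the proof of Lemma \ref{l_exist_stat}, and to control the resulting time integrals via the $W^{1,1}/W^{-1,\infty}$ duality on $\mathbb T^n$: the short-time part will be handled by the Nash--Aronson gradient estimate, and the long-time part by the exponential convergence of $\mathcal{G}$ to the uniform distribution recorded in (\ref{green_est1}).

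From the proof of Lemma \ref{l_exist_stat}, and since a stationary solution enforces $\int_{\mathbb T^n}\phi(\hat z,s)\,d\hat z=0$ a.s.,
$$
\psi(z,s)=\int_{-\infty}^{s-1}\!\!\int_{\mathbb T^n}\bigl(\mathcal{G}(z,\hat z,s,\hat s)-1\bigr)\phi(\hat z,\hat s)\,d\hat z\,d\hat s+\int_{s-1}^{s}\!\!\int_{\mathbb T^n}\mathcal{G}(z,\hat z,s,\hat s)\phi(\hat z,\hat s)\,d\hat z\,d\hat s.
$$
I would bound each inner spatial integral by the duality pairing
$$
\Bigl|\int_{\mathbb T^n}K(\hat z)\phi(\hat z,\hat s)\,d\hat z\Bigr|\leq\|K\|_{W^{1,1}(\mathbb T^n)}\,\|\phi(\cdot,\hat s)\|_{W^{-1,\infty}(\mathbb T^n)}.
$$
For the first integral, Cauchy--Schwarz on $\mathbb T^n$ applied to (\ref{green_est1}) yields
$$
\int_{s-k-1}^{s-k}\|\mathcal{G}(z,\cdot,s,\hat s)-1\|_{W^{1,1}(\mathbb T^n)}\,d\hat s\leq Ce^{-\nu k},\qquad k\geq 1,
$$
which, combined with $\|\phi\|_{L^\infty(\mathbb R;W^{-1,\infty})}$ and summed over $k$, is finite. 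For the second integral, the classical Nash--Aronson gradient bound
$$
|\nabla_{\hat z}\mathcal{G}(z,\hat z,s,\hat s)|\leq C(s-\hat s)^{-(n+1)/2}\exp\bigl(-c|z-\hat z|^2/(s-\hat s)\bigr),
$$
valid for $0<s-\hat s\leq 1$ under uniform ellipticity, yields $\|\mathcal{G}(z,\cdot,s,\hat s)\|_{W^{1,1}(\mathbb T^n)}\leq C(s-\hat s)^{-1/2}$, an integrable singularity on $(s-1,s)$. Combining the two contributions furnishes the claimed uniform bound with a deterministic constant depending only on $\lambda$ and $n$.

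The main obstacle is the Nash--Aronson gradient estimate on the torus for operators with bounded measurable coefficients; this is classical on $\mathbb R^n$ and transfers to $\mathbb T^n$ by periodization, but requires a brief verification that the unit-time heat-kernel estimates survive the passage to the periodic setting. An alternative route, avoiding the gradient bound entirely, is to decompose $\phi=\mathrm{div}_z F$ with $F\in L^\infty(\mathbb R\times\mathbb T^n;\mathbb R^n)$ of comparable norm (possible since $\phi$ has zero mean) and apply Moser's local $L^\infty$ estimate for divergence-form parabolic equations with a divergence-type source, using the $L^\infty_s L^2_z$ bound (\ref{aux_bounn}) from Lemma \ref{l_exist_stat} to absorb the solution-norm term; either approach delivers the deterministic constant stated in the corollary.
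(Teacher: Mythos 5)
Your ``alternative route'' at the end is in fact the paper's proof: the authors dispose of the corollary in one line by citing the Nash/De Giorgi--Moser local boundedness estimate for divergence-form parabolic equations with a source in divergence form (writing $\phi=\mathrm{div}_zF$, $F\in L^\infty$), and absorbing the solution norm via the stationary bound $\|\psi\|_{L^\infty(\mathbb R;L^2(\mathbb T^n))}\le C_1$ of Lemma \ref{l_exist_stat}. So the proposal does contain a correct argument, and it is essentially the intended one.

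The route you present as primary, however, rests on a false estimate. The pointwise Nash--Aronson gradient bound
$|\nabla_{\hat z}\mathcal{G}(z,\hat z,s,\hat s)|\le C(s-\hat s)^{-(n+1)/2}\exp\bigl(-c|z-\hat z|^2/(s-\hat s)\bigr)$
does \emph{not} hold for divergence-form operators whose coefficients are merely bounded, measurable and uniformly elliptic, which is all that {\bf H1} provides; Aronson's theorem gives Gaussian bounds on the kernel itself and Nash's theorem gives H\"older continuity, but pointwise gradient bounds require additional regularity (H\"older, Dini or VMO) of $a$ in $z$. The obstacle is therefore not, as you suggest, the passage from $\mathbb R^n$ to $\mathbb T^n$. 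The weaker integrated conclusion you actually need, $\|\nabla_{\hat z}\mathcal{G}(z,\cdot,s,\hat s)\|_{L^1(\mathbb T^n)}\le C(s-\hat s)^{-1/2}$, is nevertheless true: it follows from a weighted Caccioppoli (Davies--Gaffney type) inequality combined with the Gaussian upper bound, via
$\int|\nabla_{\hat z}\mathcal{G}|\,d\hat z\le\bigl(\int|\nabla_{\hat z}\mathcal{G}|^2e^{\alpha|z-\hat z|^2/(s-\hat s)}d\hat z\bigr)^{1/2}\bigl(\int e^{-\alpha|z-\hat z|^2/(s-\hat s)}d\hat z\bigr)^{1/2}\le C(s-\hat s)^{-1/2}$,
so the kernel route can be repaired, but not as written. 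Two further small points on that route: estimate (\ref{green_est1}) controls $\mathcal{G}$ in its \emph{forward} arguments $(z,s)$ for fixed pole, whereas your first integral requires the analogous bound in the backward arguments $(\hat z,\hat s)$ (this follows by the same Harnack argument applied to the adjoint equation, using the symmetry of $a$, but should be said); and the duality pairing with $\phi\in W^{-1,\infty}$ is cleanest if you first reduce to $\phi=\mathrm{div}_zF$ and pair $F$ against $\nabla_{\hat z}\mathcal{G}$, sidestepping the identification of $(W^{1,1})^*$.
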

\begin{proof}
This statement follows from Lemma \ref{l_exist_stat} due to the Nash type estimates  for solutions of parabolic equations
(see \cite[Theorem VII,3.1]{GT}).
\end{proof}

\bigskip
Denote by $\mathcal{F}^{a,\phi}_{\leq T}$  the $\sigma$-algebra  $\sigma\{a(z,s),\phi(x,s)\,:\,s\leq T\}$.  The $\sigma$-algebra
$\mathcal{F}^{a,\phi}_{\geq T}$ is defined accordingly. Let $\rho_{a,\phi}(r)$ be  maximum correlation coefficient of $(a,\phi)$.
Denote also
$$
l(s)=\int_{\mathbb T^n}\big(a(z,s)\nabla_z\psi(z,s)-\E(a(z,s)\nabla_z\psi(z,s))\big)\,dz.
$$
\begin{lmm}\label{l_mixi}
For the vector-function $l(\cdot)$ the following estimate holds
$$
\|\E\{l(s)\,|\, \mathcal{F}^{a,\phi}_{\leq 0}\}\|\big._{L^2(\Omega)}\leq C\big(e^{-\nu s/2}+\rho_{a,\phi}(s/2)\big), \qquad
\nu>0.
$$
\end{lmm}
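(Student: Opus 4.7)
The plan is to split $l(s) = L_1(s) + L_2(s)$ so that $L_2(s)$ is measurable with respect to $\mathcal{F}^{a,\phi}_{\geq s/2}$ (which lets us invoke the maximum correlation coefficient at scale $s/2$) while $L_1(s)$ already has $L^2(\Omega)$-norm that is exponentially small in $s$, so for $L_1$ the $\rho$-bound is wasteful and one simply uses $\|\mathbf{E}[L_1\,|\,\mathcal{F}^{a,\phi}_{\leq 0}]\|_{L^2(\Omega)}\leq\|L_1\|_{L^2(\Omega)}$. The splitting is constructed at the level of $\psi$ by decomposing the source contribution according to whether the time integration is before or after $s/2$.

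Concretely, by Lemma \ref{l_exist_stat} normalise the additive constant so that $\int_{\mathbb T^n}\psi(z,r)\,dz = 0$ for every $r$. Let $\psi_2$ solve (\ref{aux_para}) on $[s/2,s]$ with zero initial data at time $s/2$ and source $\phi$, and set $\psi_1 := \psi - \psi_2$. Then $\psi_1$ solves the homogeneous equation $\partial_r\psi_1 = \mathrm{div}(a\nabla\psi_1)$ on $[s/2,s]$ with initial datum $\psi(\cdot,s/2)$, which has zero spatial mean. Since $a(\cdot,s)$ and $\psi_2(\cdot,s)$ depend only on the values of $(a,\phi)$ on $[s/2,s]$, the piece
\[
L_2(s) := \int_{\mathbb T^n} a(z,s)\nabla\psi_2(z,s)\,dz - \mathbf{E}\int_{\mathbb T^n} a(z,s)\nabla\psi_2(z,s)\,dz
\]
is $\mathcal{F}^{a,\phi}_{\geq s/2}$-measurable and centred; stationarity together with (\ref{aux_bounn}) ensures $\|L_2(s)\|_{L^2(\Omega)}\leq C$ uniformly in $s$. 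The complementary piece $L_1(s) := l(s) - L_2(s) = \int a\nabla\psi_1\,dz - \mathbf{E}\int a\nabla\psi_1\,dz$ is estimated by
\[
\|L_1(s)\|_{L^2(\Omega)}^2 \leq C\,\mathbf{E}\|\nabla\psi_1(\cdot,s)\|_{L^2(\mathbb T^n)}^2 \leq C\,e^{-\nu s}\,\mathbf{E}\|\psi(\cdot,s/2)\|_{L^2(\mathbb T^n)}^2 \leq C_1\,e^{-\nu s},
\]
the middle inequality relying on the Poincaré-type dissipation for the homogeneous parabolic equation on the torus combined with parabolic smoothing (the mechanism underlying (\ref{green_est1})), and the last inequality being (\ref{aux_bounn}).

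To close, apply $\mathbf{E}[\cdot\,|\,\mathcal{F}^{a,\phi}_{\leq 0}]$. For $L_1$ one just uses contractivity of conditional expectation:
\[
\big\|\mathbf{E}[L_1(s)\,|\,\mathcal{F}^{a,\phi}_{\leq 0}]\big\|_{L^2(\Omega)} \leq \|L_1(s)\|_{L^2(\Omega)} \leq C\,e^{-\nu s/2}.
\]
For $L_2$, apply the maximum-correlation inequality componentwise to the centred $\mathcal{F}^{a,\phi}_{\geq s/2}$-measurable vector $L_2(s)$ with test variable $\eta = \mathbf{E}[L_2(s)\,|\,\mathcal{F}^{a,\phi}_{\leq 0}]$; this gives
\[
\big\|\mathbf{E}[L_2(s)\,|\,\mathcal{F}^{a,\phi}_{\leq 0}]\big\|_{L^2(\Omega)} \leq \rho_{a,\phi}(s/2)\,\|L_2(s)\|_{L^2(\Omega)} \leq C\,\rho_{a,\phi}(s/2),
\]
and the triangle inequality yields the claimed bound.

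The genuinely delicate step is the pointwise-in-$s$ control of $\mathbf{E}\|\nabla\psi_1(\cdot,s)\|^2_{L^2(\mathbb T^n)}$. The estimate (\ref{green_est1}) is only $L^2$-in-time, so one cannot evaluate it at the endpoint directly. The cure is a two-scale argument: first use exponential $L^2$-decay of $\psi_1(\cdot,r)$ on $[s/2,s-1]$ coming from Poincaré plus the energy identity, then invoke Aronson--Nash type parabolic regularisation on the unit interval $[s-1,s]$ to upgrade the resulting $L^2$ bound at time $s-1$ into an $H^1$ bound at time $s$. Apart from this technical point the proof is the standard "exponential decay plus mixing" decomposition.
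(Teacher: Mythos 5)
Your proof is correct and follows essentially the same route as the paper: the paper splits $\psi=\psi^1+\psi^2$ by cutting the Duhamel integral at $\hat s=s/2$, which by the semigroup property is exactly your decomposition into the homogeneous evolution of $\psi(\cdot,s/2)$ plus the source contribution on $[s/2,s]$, and then bounds the far-past piece by $Ce^{-\nu s/2}$ and the recent, $\mathcal{F}^{a,\phi}_{\geq s/2}$-measurable piece via the maximum correlation coefficient. Your additional remark about upgrading the $L^2$-in-time estimate (\ref{green_est1}) to a pointwise-in-$s$ bound through parabolic regularisation fills in a detail the paper leaves implicit.
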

\begin{proof}
This inequality has been proved in \cite[Proof of Lemma 3]{KP02}.  Here we provide an outline of the proof. We represent
$$
\psi(z,s)=\psi^1(z,s)+\psi^2(z,s)=
$$
$$
=\int\limits_{-\infty}^{s/2}\int\limits_{\mathbb T^n}\big(\mathcal{G}(z,\hat z,s,\hat s)-1\big)\phi(\hat z,\hat s)\,d\hat zd\hat s
+\int\limits^{s}_{s/2}\int\limits_{\mathbb T^n}\big(\mathcal{G}(z,\hat z,s,\hat s)\big)\phi(\hat z,\hat s)\,d\hat zd\hat s.
$$
Then
$$
l(s)=l^1(s)+l^2(s),\quad l^{1,2}(s)=\!\int\limits_{\mathbb T^n}\!\big(a(z,s)\nabla_z\psi^{1,2}(z,s)-\E(a(z,s)\nabla_z\psi^{1,2}(z,s))\big)\,dz.
$$
Considering (\ref{green_est1}) we get  $\|l^1(s)\|_{L^2(\Omega)}\leq Ce^{-\nu s/2}$.  Since $l^2(s)$ is $ \mathcal{F}^{a,\phi}_{\geq s/2}$-measurable,
we obtain $\|\E\{l^2(s)\,|\, \mathcal{F}^{a,\phi}_{\leq 0}\}\|\big._{L^2(\Omega)}\leq C\rho_{a,\phi}(s/2)$. This yields the desired inequality.
\end{proof}


\section{Formal asymptotic expansion}\label{s_asy}

In this section we deal with the formal asymptotic expansion of a solution of problem (\ref{ory_cauchy}).
Although, in contrast with the periodic case,  this method fails to work in full generality  in the case under consideration,
we can use it in order to understand the structure of the leading terms of the difference $u^\eps-u^0$. As usually in the multi-scale asymptotic expansion method we consider $z=x/\eps$ and $s=t/\eps^2$ as independent variables
and use repeatedly the formulae
$$
\begin{array}{c}
\frac{\partial}{\partial x_j}f\Big(x,\frac{x}{\eps}\Big)=\Big(\frac{\partial}{\partial x_j}f(x,z)+
\frac{1}{\eps}\frac{\partial}{\partial z_j}f(x,z)\Big)_{z=\frac{x}{\eps}},\\[4mm]
\frac{\partial}{\partial t}f\Big(t,\frac{t}{\eps^2}\Big)=\Big(\frac{\partial}{\partial t}f(t,s)+
\frac{1}{\eps^2}\frac{\partial}{\partial s}f(t,s)\Big)_{s=\frac{t}{\eps}}.
\end{array}
$$
We
represent a solution $u^\eps$ as the following
asymptotic series in integer powers of $\eps$:
\begin{equation}\label{asy}
u^\eps(x,t)=u^0(x,t)+\eps u^1\Big(x,t,\frac{x}{\eps},\frac{t}{\eps^2}\Big)+\eps^2 u^2\Big(x,t,\frac{x}{\eps},\frac{t}{\eps^2}\Big)+\dots\,;
\end{equation}
here all the functions $u^j(x,t,z,s)$ are periodic in $z$.
The dependence in $s$ is not always stationary.

Substituting the expression on the right-hand side of (\ref{asy}) for $u^\eps$ in (\ref{eq:u^eps}) and collecting
power-like terms in (\ref{eq:u^eps}) yields
\begin{itemize}
\item[$(\eps^{-1}):$]\hskip 1.2 cm$\partial_s
u^1-\mathrm{div}_z\big(a(z,s)\nabla_zu^1\big)=
-\mathrm{div}_z\big(a(z,s) \nabla_x u^0\big)$.
\item[$(\eps^{0})\ \,:$]\hskip 1 cm$
\begin{array}{r}\partial_s
u^2-\mathrm{div}_z\big(a(z,s)\nabla_zu^2\big)=
-\partial_t u^0
+\mathrm{div}_x\big(a(z,s) \nabla_x u^0\big)\ \\[1.3mm]
+\mathrm{div}_z\big(a(z,s) \nabla_x u^1\big)
+\mathrm{div}_x\big(a(z,s) \nabla_z u^1\big).
\end{array}$
\item[$(\eps^{1})\ \,:$]\hskip 1 cm
$
\begin{array}{r}\partial_s
u^3-\mathrm{div}_z\big(a(z,s)\nabla_zu^3\big)=
-\partial_t u^1
+\mathrm{div}_x\big(a(z,s) \nabla_x u^1\big)\ \\[1.3mm]
+\mathrm{div}_z\big(a(z,s) \nabla_x u^2\big)
+\mathrm{div}_x\big(a(z,s) \nabla_z u^2\big).
\end{array}$
\end{itemize}
We will see later on that dealing with the first three equations is sufficient.

\medskip
In equation ($\eps^{-1}$) the variables $x$ and $t$ are parameters.
By Lemma \ref{l_exist_stat} this equation  has the unique stationary solution.
The fact that the right-hand side of the equation is of the form $[\mathrm{div}_z\big(a(z,s)]\cdot\nabla_x u^0$ suggests that
$$
u^1(x,t,z,s)=\chi(z,s)\nabla u^0(x,t).
$$
with a vector-function $\chi=\{\chi^j(z,s)\}_{j=1}^n$ solving equation \eqref{aux_alp2} that reads
$$
\partial_s
\chi-\mathrm{div}_z\big(a(z,s)\nabla_z\chi\big)=
\mathrm{div}_z\big(a(z,s)\big),
$$
$\mathrm{div}_z a(z,s)$ stands for $\frac{\partial}{\partial z^i}a^{ij}(z,s)$.
By Lemma \ref{l_exist_stat} and Corollary \ref{cor_t1} we have
$\chi\in (L^\infty(\mathbb R\times \mathbb T^n))^n\cap (L^2_{\rm loc}(\mathbb R; H^1(\mathbb T^n)))^n$,
and
\begin{equation}\label{chi_est}
\|\chi^j\|_{L^\infty(\mathbb R\times \mathbb T^n)}\le C,\qquad
\|\chi^j\|_{L^2([0,1];H^1(\mathbb T^n))}\le C,\quad j=1,\ldots,n.
\end{equation}
with a deterministic constant $C$.
For the sake of definiteness we assume from now on that
\begin{equation}\label{norm_coo}
\int_{\mathbb T^n} \chi(z,s)dz=0.
\end{equation}
One can easily check that this integral does not depend on $s$ so that the normalization condition makes sense.

We turn to the terms of order $\eps^0$. We do not reprove here the homogenization results (see \cite{JKO82}) and assume that $u^0$ satisfies problem (\ref{hom_cauchy}) with $a^{\rm eff}$ given by (\ref{aeff-ravno}). Then the right-hand side of  equation ($\eps^0$) takes the form
$$
-\partial_t u^0
+\mathrm{div}_x\big(a(z,s) \nabla_x u^0\big)
+\mathrm{div}_z\big(a(z,s) \nabla_x u^1\big)
+\mathrm{div}_x\big(a(z,s) \nabla_z u^1\big)=
$$
$$
=
\mathrm{div}_x\big(\{a(z,s)(\mathbf{I}+\nabla_z\chi(z,s)) -a^{\rm eff}\}\nabla_x u^0\big)
+\mathrm{div}_z\big(a(z,s) \nabla_x u^1\big)  
$$
By the definition of $a^{\rm eff}$  (see (\ref{aeff-ravno})) we have
$$
\E\int_{\mathbb T^n}\{a(z,s)(\mathbf{I}+\nabla_z\chi(z,s)) -a^{\rm eff}\}\,dz=0.
$$
Letting
\begin{equation}\label{defpsi21}
\Psi_{2,1}(s)=\int\limits_{\mathbb T^n}\{a(z,s)(\mathbf{I}+\nabla_z\chi(z,s)) -a^{\rm eff}\}\,dz
\end{equation}
and
\begin{equation}\label{defpsi22}
\Psi_{2,2}(z,s)\!=\!\{a(z,s)(\mathbf{I}+\nabla_z\chi(z,s))\! -\!a^{\rm eff}\}\!-\!\Psi_{2,1}(s)+\mathrm{div}_z \big(\!a(z,s)\otimes\chi(z,s)\!\big)
\end{equation}
with
$$
\mathrm{div}_z \big(a(z,s)\otimes\chi(z,s)=\big\{\frac{\partial}{\partial z^i}a^{ij}(z,s)\chi^k(z,s)\big\}_{j,k=1}^n,
$$
we rewrite equation  ($\eps^{0}$) as follows
\begin{equation}\label{mod_eps0}
\partial_s
u^2-\mathrm{div}_z\big(a(z,s)\nabla_zu^2\big)=
\big(\Psi^{ij}_{2,1}(s)+\Psi^{ij}_{2,2}(z,s)\big)\frac{\partial^2}{\partial x^i\partial x^j}u^0.
\end{equation}

Since the process $\int_0^s\Psi_{2,1}(r)dr$ need not be stationary, we cannot follow any more the same strategy as in the periodic case.  Instead,  we consider the equation
\begin{equation} \label{eq:aux004}
\left\{ \begin{array}{lcl} \displaystyle
\frac{\partial V^{\eps,1}}{\partial t}& = &\displaystyle \mathrm{div}\Big( a\Big( \frac{x}{\eps}, \frac{t}{\eps^2} \Big)\nabla V^{\eps,1}\Big)+\Psi^{ij}_{2,1}\Big(\frac{t}{\eps^2}\Big)
\frac{\partial^2}{\partial x^i\partial x^j}u^0(x,t)   \\[4mm]
V^{\eps,1} (0,x) & = & 0.
\end{array} \right.
\end{equation}
This suggests the representation
\begin{equation}\label{asy_modif}
\begin{array}{rl}
\displaystyle
u^\eps(x,t)
= \!\!&\!\! \displaystyle
u^0(x,t)+\eps \chi\Big(\frac{x}{\eps},\frac{t}{\eps^2}\Big)\nabla u^0(x,t)
\\[2mm]
&\!\!  \displaystyle
+V^{\eps,1}+\eps^2 v^2\Big(x,t,\frac{x}{\eps},\frac{t}{\eps^2}\Big)+\dots
\end{array}
\end{equation}
with
$$
v^2(x,t,z,s)=\chi_{2,2}^{ij}(z,s)\frac{\partial^2}{\partial x^i\partial x^j}u^0(x,t)\,,
$$
where
$\chi_{2,2}^{ij}(z,s)$ is a stationary zero mean solution of the equation
\begin{equation}\label{eq_chi22}
\partial_s\chi_{2,2}^{ij}(z,s)-\mathrm{div}_z\big(a(z,s)
\nabla_z\chi_{2,2}^{ij}(z,s)\big)=\Psi_{2,2}^{ij}(z,s).
\end{equation}
It is straightforward to check that due to (\ref{chi_est})--(\ref{defpsi22}) we have
$$
 \|\Psi\big._{2,2}^{ij}\|_{L^2((0,1);H^{-1}(\mathbb T^n))}\leq C, \qquad i,j=1,\ldots,n.
$$
Then the conditions of Lemma \ref{l_exist_stat} are fulfilled for  equation (\ref{eq_chi22}) and, therefore,
this equation has a stationary solution that satisfies the estimate
\begin{equation}\label{chi02_est}
\|\chi\big._{2,2}^{ij}\|_{L^2([0,1];H^1(\mathbb T^n))}+
\|\chi\big._{2,2}^{ij}\|_{L^\infty((-\infty.+\infty);L^2(\mathbb T^n))}\le C, \qquad i,j=1,\ldots,n.
\end{equation}
with a deterministic constant $C$.

By its definition,
$\Psi_{2,1}(s)$ is a stationary zero average process. Denote
$$
\chi_{2,1}^{ij}(s)=\int_0^s\Psi^{ij}_{2,1}(r)dr.
$$
Estimates (\ref{chi_est}) imply that
$$
\|\Psi^{ij}_{2,1}\|_{L^2(0,1)}\le C,\qquad i,j=1,\ldots,n.
$$
with a deterministic constant $C$.
It follows from Lemmata \ref{l_exist_stat} and \ref{l_mixi} that under condition
{\bf H3}  it holds
$$
\int_0^\infty\|\E\{\Psi_{2,1}(s)\,|\,\mathcal{F}_{\leq 0}^{\Psi_{2,1}}\}\|\big._{(L^2(\Omega))^{\,n^2}}\,ds\leq
C\int_0^\infty\big(e^{-\nu s/2}+\rho_{\Psi_{2,1}}(s/2)\big)dy<\infty.
$$
Therefore, the invariance principle holds for this process (see \cite[Theorem VIII.3.79]{JSh}),
that is for any $T>0$
\begin{equation}\label{inv_p}
\eps\chi_{2,1}\Big(\frac{\cdot}{\eps^2}\Big) \ \mathop{\longrightarrow}\limits_{\eps\to0}\
\Lambda^{1/2} W_\cdot
\end{equation}
in law in the space $(C[0,T])\big.^{n^2}$ with
$$
\Lambda^{ijkl}=\int\limits_0^\infty\E\big(\Psi_{2,1}^{ij}(0) \Psi_{2,1}^{kl}(s)
+\Psi_{2,1}^{kl}(0)\Psi_{2,1}^{ij}(s)\big)\,ds,
$$
here $W$ is a standard $n^2$-dimensional Wiener process.
Since the $n^2\times n^2$ matrix $\Lambda$ is symmetric and positive (but not necessary positive definite), its square root is well defined.

\begin{remark}
One can see that the processes $\chi_{2,1}$ and $\chi_{2,2}$ show rather different behaviour. In fact, since  the process $\chi_{2,2}$ is stationary,
the function $\eps\chi_{2,2}(x/\eps,t/\eps^2)$ goes to zero, as $\eps\to0$. To the contrary, by the Cental Limit Theorem type arguments, the process
$\eps \chi_{2,1}(t/\eps^2)$ need not tend to zero on $[0,T]$, and, thus, it contributes to the asymptotics in question. Under our standing conditions,
this process is of order one.
\end{remark}
\begin{lemma}\label{l_stochpde}
The functions $\eps^{-1} V^{\eps,1}$ converges in law, as $\eps\to0$, in the space $C((0,T);L^2(\mathbb R^n))$ to the unique solution of the following SPDE with a finite dimensional additive noise:
\begin{equation}\label{aux_spde}
\left\{ \begin{array}{lcl} \displaystyle
dV^{0,1}& = &\displaystyle \mathrm{div}( a^{\rm eff}\nabla V^{0,1})dt+\big(\Lambda^{1/2}\big)^{ijkl}
\frac{\partial^2}{\partial x^i\partial x^j}u^0(x,t)dW_{t,kl}  \\[4mm]
V^{0,1} (0,x) & = & 0.
\end{array} \right.
\end{equation}
\end{lemma}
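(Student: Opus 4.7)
The strategy is to exploit the identity $\Psi_{2,1}^{ij}(s)=\frac{d}{ds}\chi_{2,1}^{ij}(s)$ to recast the singular forcing in \eqref{eq:aux004} (after dividing by $\eps$) as a Stieltjes increment of the process $\zeta^{\eps,ij}(t):=\eps\,\chi_{2,1}^{ij}(t/\eps^2)$, and then to pass to the limit by combining the invariance principle \eqref{inv_p} with the almost sure homogenization of the operator $\mathrm{div}(a(x/\eps,t/\eps^2)\nabla\,\cdot)$.

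First I would set $\tilde V^\eps:=\eps^{-1}V^{\eps,1}$ and use the identity $\Psi_{2,1}^{ij}(t/\eps^2)\,dt=\eps\,d\zeta^{\eps,ij}(t)$ to rewrite \eqref{eq:aux004} as
\begin{equation*}
d\tilde V^\eps=\mathrm{div}\bigl(a(x/\eps,t/\eps^2)\nabla \tilde V^\eps\bigr)dt+\frac{\partial^2 u^0}{\partial x^i\partial x^j}(x,t)\,d\zeta^{\eps,ij}(t),\quad \tilde V^\eps(\cdot,0)=0.
\end{equation*}
Since $\zeta^\eps$ is absolutely continuous, Duhamel's formula applies pathwise. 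Denoting by $S^\eps(t,r)$ the two-parameter Green operator of $\partial_t-\mathrm{div}(a(x/\eps,t/\eps^2)\nabla\,\cdot)$, and integrating by parts in $r$ with $\zeta^\eps(0)=0$ and $S^\eps(t,t)=\mathrm{Id}$, I would obtain
\begin{equation*}
\tilde V^\eps(\cdot,t)=\zeta^{\eps,ij}(t)\frac{\partial^2 u^0}{\partial x^i\partial x^j}(\cdot,t)-\int_0^t\zeta^{\eps,ij}(r)\,\partial_r\bigl\{S^\eps(t,r)\tfrac{\partial^2 u^0}{\partial x^i\partial x^j}(\cdot,r)\bigr\}dr.
\end{equation*}

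For the limit passage, I would first use this representation, together with the uniform bound on $\|\zeta^\eps\|_{C([0,T])}$ implied by \eqref{inv_p} and the smoothness of $u^0$ from \textbf{H2}, to establish tightness of $\tilde V^\eps$ in $C([0,T];L^2(\R^n))$. Next, since the homogenized limit $S^0$ is deterministic, a Slutsky-type argument promotes the almost sure homogenization $S^\eps(t,r)f\to S^0(t-r)f$ (provided by the homogenization theorem of Section~\ref{s_homoge}) together with the convergence in law \eqref{inv_p} of $\zeta^\eps$ to joint convergence in law $(S^\eps,\zeta^\eps)\to(S^0,\Lambda^{1/2}W)$. Passing to the limit in the representation above yields
$\tilde V^0(\cdot,t)=(\Lambda^{1/2}W_t)^{ij}\partial_i\partial_j u^0(\cdot,t)-\int_0^t(\Lambda^{1/2}W_r)^{ij}\partial_r\{S^0(t-r)\partial_i\partial_j u^0(\cdot,r)\}dr$.
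Because $u^0$ solves a constant-coefficient homogeneous parabolic equation, so does $\partial_i\partial_j u^0$; hence $\partial_r\{S^0(t-r)\partial_i\partial_j u^0(\cdot,r)\}\equiv 0$, the integral term vanishes, and $\tilde V^0(x,t)=(\Lambda^{1/2}W_t)^{ij}\partial_i\partial_j u^0(x,t)$. A direct application of It\^o's formula to this explicit expression confirms that it is the mild, hence strong, solution of \eqref{aux_spde}; strong existence and uniqueness for that linear SPDE with deterministic coefficients are classical and complete the identification of the limit.

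The hard part will be the joint convergence in step 3: at finite $\eps$ the processes $S^\eps$ and $\zeta^\eps$ are statistically dependent, both being generated by the same realization of $a(z,s)$, so their joint law is nontrivial. The saving point is that the homogenized limit $S^0$ is deterministic, which permits a Skorokhod-representation argument to promote marginal convergences into joint convergence in law. A secondary technical point, handled by standard parabolic regularity, is the justification of passing to the limit under $\partial_r$ inside the $r$-integral; this requires uniform $H^1$-type control on $\partial_r S^\eps(t,r)\partial_i\partial_j u^0(\cdot,r)$, which follows from the smoothness of $u^0$ provided by \textbf{H2} together with standard energy estimates.
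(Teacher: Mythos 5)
Your representation is, after unwinding the Duhamel formula, exactly the paper's decomposition: the boundary term $\zeta^{\eps,ij}(t)\partial_i\partial_j u^0$ is the paper's auxiliary function $\check V^\eps$ of (\ref{explicit}), and your remainder integral $-\int_0^t\zeta^{\eps,ij}(r)\,\partial_r\{S^\eps(t,r)\partial_i\partial_j u^0(\cdot,r)\}\,dr$ is precisely the paper's $Z^\eps$. The treatment of the main term (invariance principle plus smoothness of $u^0$) and the Slutsky observation that the deterministic limit of $S^\eps$ permits joint convergence in law despite the statistical dependence of $S^\eps$ and $\zeta^\eps$ are both sound and consistent with what the paper does (the paper implements the same idea by freezing $\zeta^\eps$ into a deterministic continuous $\rho(t)$ and invoking tightness of $\{\zeta^\eps\}$ in $C[0,T]$).

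The genuine gap is in your dismissal of the remainder integral as a matter of ``standard parabolic regularity.'' Since $\partial_r S^\eps(t,r)=-S^\eps(t,r)A^\eps(r)$ with $A^\eps(r)=\mathrm{div}(a(\cdot/\eps,r/\eps^2)\nabla\,\cdot)$, and $\partial_r(\partial_i\partial_j u^0)=\mathrm{div}(a^{\rm eff}\nabla\partial_i\partial_j u^0)$, the integrand equals $S^\eps(t,r)\,\mathrm{div}\bigl((a^{\rm eff}-a(\cdot/\eps,r/\eps^2))\nabla\partial_i\partial_j u^0\bigr)$. This is a pairing of two quantities that converge only \emph{weakly}: the forcing $\mathrm{div}((a^{\rm eff}-a^\eps)\nabla f)$ oscillates and its weak $H^{-1}$ limit is $\mathrm{div}((a^{\rm eff}-\bar a)\nabla f)$ with $\bar a=\E\int_{\mathbb T^n}a\,dz\neq a^{\rm eff}$, while the (adjoint) Green operator converges strongly in $L^2$ but only weakly at the level of gradients. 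Taking the naive product of weak limits gives a \emph{nonzero} limit for your integral; the fact that the true limit is zero (i.e.\ that the effective matrix $a^{\rm eff}$, not the plain average $\bar a$, emerges) is a compensated-compactness statement. This is exactly what the paper's proof spends most of its length on: it tests the equation for the remainder against the corrected test function $\varphi+\eps\chi_-(x/\eps,t/\eps^2)\nabla\varphi$ built from the adjoint corrector of (\ref{inv_aux}), verifies that all singular terms cancel by the corrector equation, and only then identifies the limit as the solution of the homogeneous homogenized equation with zero data. Uniform $H^{-1}$ (or $H^1$) bounds give you tightness of the remainder but cannot identify its limit; without the oscillating-test-function (or div-curl) step your argument would either stall or produce the wrong limit. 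To complete your proof you must insert this corrector argument for the remainder integral.
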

\begin{proof}
The proof is a consequence of (\ref{inv_p}) and the fact that $u^0(x,t)$ is a smooth deterministic function
vanishing with its derivatives at infinity. To see this we introduce  an auxiliary function $\check V^\eps$ as the solution
to the following Cauchy problem
$$
\left\{ \begin{array}{lcl} \displaystyle
\frac{\partial \check V^{\eps}}{\partial t}& = &\displaystyle \mathrm{div}( a^{\rm eff} \nabla \check V^{\eps})+\frac{1}{\eps}\Psi^{ij}_{2,1}\Big(\frac{t}{\eps^2}\Big)
\frac{\partial^2}{\partial x^i\partial x^j}u^0(x,t)   \\[4mm]
\check V^{\eps} (0,x) & = & 0.
\end{array} \right.
$$
For the sake of brevity we denote $v^0_{ij}(x,t)=\frac{\partial^2}{\partial x^i\partial x^j}u^0(x,t) $. Notice that $v^0_{ij}$ solves
the equation $\partial_t v^0_{ij}-\mathrm{div}(a^{\rm eff}\nabla v^0_{ij})\Big.$ for all $i,j=1,\ldots,n$. Then one can easily check that
\begin{equation}\label{explicit}
\check V^{\eps}(x,t)=\eps\chi^{ij}_{2,1}\Big(\frac{t}{\eps}^2\Big)v^0_{ij}(x,t)
\end{equation}
Our first goal is to show that
\begin{equation}\label{o_to_e}
\|\eps^{-1}V^{\eps,1}-\check V^\eps\|_{L^2((0,T)\times\mathbb R^n)}\,\longrightarrow 0 \qquad \hbox{in probability.}
\end{equation}
To this end we represent $\eps^{-1}V^{\eps,1}$ as
$$
\eps^{-1}V^{\eps,1}=\eps\chi^{ij}_{2,1}\Big(\frac{t}{\eps}^2\Big)v^0_{ij}(x,t)+Z^\eps(x,t)
$$
and substitute it in (\ref{eq:aux004}). This yields the following equation for $Z^\eps$\,:
$$
\left\{ \begin{array}{lcl} \displaystyle
\frac{\partial  Z^{\eps}}{\partial t}= \displaystyle \mathrm{div}\Big( a\Big(\frac{x}{\eps},\frac{t}{\eps^2}\Big) \nabla Z^{\eps}\Big)+
\eps\chi^{ij}_{2,1}\Big(\frac{t}{\eps^2}\Big)
\Big\{\partial_t v^0_{ij}-\mathrm{div}\Big(a\Big(\frac{x}{\eps},\frac{t}{\eps^2}\Big)\nabla v^0_{ij}\Big)\Big\}
 \\[4mm]
Z^{\eps} (0,x)  =  0.
\end{array} \right.
$$

Let $\rho=\rho(t)$ be a continuous function on $[0,T]$. Then
\begin{equation}\label{paraene}
\Big\|\rho(t)\Big\{\partial_t v^0_{ij}-\mathrm{div}\Big(a\Big(\frac{x}{\eps},\frac{t}{\eps^2}\Big)\nabla v^0_{ij}\Big)\Big\}\Big\|_{L^2(0,T; H^{-1}(\mathbb R^n))}\leq C\|\rho\|_{L^\infty(0,T)},
\end{equation}
where the constant $C$ does not depend on $\eps$.   Next, we consider the following Cauchy problem:

\begin{equation} \label{e_call_z}
\left\{\! \begin{array}{lcl} \displaystyle
\frac{\partial  \mathcal{Z}^{\eps}}{\partial t}= \displaystyle \mathrm{div}\Big( a\Big(\frac{x}{\eps},\frac{t}{\eps^2}\Big) \nabla \mathcal{Z}^{\eps}\Big)+
\rho(t)
\Big\{\partial_t v^0_{ij}-\mathrm{div}\Big(a\Big(\frac{x}{\eps},\frac{t}{\eps^2}\Big)\nabla v^0_{ij}\Big)\Big\}
 \\[4mm]
\mathcal{Z}^{\eps} (0,x)  =  0.
\end{array} \right.
\end{equation}
With the help of energy estimates we derive from  (\ref{paraene}) that
$$
\| \mathcal{Z}^\eps\|_{L^2(0,T;H^1(\mathbb R^n))}+\| \partial_t \mathcal{Z}^\eps\|_{L^2(0,T;H^{-1}(\mathbb R^n))}\leq C\|\rho\|_{L^\infty(0,T)}.
$$
Taking into account the fast decay of $v^0$ and its derivatives at infinity we deduce from this estimate (see \cite{Lio}) that almost surely
for a subsequence the function $\mathcal{Z}^\eps$ converges in $C([0,T];L^2(\mathbb R^n))$ to some function  $\mathcal{Z}^0$.
In order to characterize $\mathcal{Z}^0$, assume for a while that $\rho$ is smooth. For an arbitrary $\varphi\in C_0^\infty((0,T)\times\mathbb R^n)$
we use in the integral identity of problem (\ref{e_call_z})  the following test function
$$
\varphi^\eps(x,t)=\varphi(x,t)+\eps\chi\big._-\Big(\frac{x}{\eps},\frac{t}{\eps^2}\Big)\nabla\varphi(x,t)
$$
with $\chi\big._-$ defined in (\ref{inv_aux}).
Setting $a^\eps(x,t)=a\big(\frac{x}{\eps},\frac{t}{\eps^2}\big)$, $\chi_-^\eps(x,t)=\chi\big._-\big(\frac{x}{\eps},\frac{t}{\eps^2}\big)$, after integration by parts
in this integral identity and straightforward rearrangements we obtain
$$
-\int\limits_0^T\!\!\int\limits_{\mathbb R^n}\!\mathcal{Z}^\eps\big(\partial_t\varphi +  (a^\eps)^{ij}\partial_{x^i}\partial_{x^j} \varphi
+ (a^\eps)^{ij}\partial_{z^j}(\chi_-^\eps)^k\partial_{x^i}\partial_{x^k}\varphi+ \partial_{z^i}[(a^\eps)^{ij}(\chi_-^\eps)^k]\partial_{x^j}\partial_{x^k}\varphi\big)\,dxdt
$$
$$
-\eps^{-1}\int\limits_0^T\!\!\int\limits_{\mathbb R^n}\!\mathcal{Z}^\eps\big( \partial_{z^i}(a^\eps)^{ij}\partial_{x^j}\varphi+\partial_s(\chi_-^\eps)^j
\partial_{x^j}\varphi + \partial_{z^i}[(a^\eps)^{ij}\partial_{z^j}(\chi_-^\eps)^k] \partial_{x^k}\varphi \big)\,dxdt
$$
$$
-\eps\int\limits_0^T\!\!\int\limits_{\mathbb R^n}\!\mathcal{Z}^\eps\big((a^\eps)^{ij}(\chi_-^\eps)^k\partial_{x^i}\partial_{x^j}\partial_{x^k}\varphi
+(\chi_-^\eps)^j \partial_t\partial_{x^j}\varphi \big)\,dxdt
$$
$$
=\int\limits_0^T\!\!\int\limits_{\mathbb R^n}\!\big(\rho\varphi\partial_t v^0_{lm} - \rho v^0_{lm}\{(a^\eps)^{ij}\partial_{x^i}\partial_{x^j}\varphi
-(a^\eps)^{ij}\partial_{z^i}(\chi_-^\eps)^k \partial_{x^j} \partial_{x^k}\varphi-\partial_{z^j}[(a^\eps)^{ij}(\chi_-^\eps)^k] \partial_{x^i} \partial_{x^k}\varphi\}\big)\,dxdt
$$
$$
-\eps^{-1}\int\limits_0^T\!\!\int\limits_{\mathbb R^n}\!\big( \rho v^0_{lm}\{\partial_s(\chi_-^\eps)^k+\partial_{z^i}((a^\eps)^{ij}\partial_{z^j}(\chi_-^\eps)^k)+\partial_{z^i}(a^\eps)^{ik}\}\partial_{x^k}\varphi\big)\,dxdt
$$
$$
-\eps\int\limits_0^T\!\!\int\limits_{\mathbb R^n}\!\big( v^0_{lm}(\chi_-^\eps)^k\partial_t(\rho\varphi)+ \rho v^0_{lm}(a^\eps)^{ij}(\chi_-^\eps)^k
 \partial_{x^i}  \partial_{x^j} \partial_{x^k}\varphi\big)\,dxdt
$$
Notice that due to equation (\ref{inv_aux}) all the terms of order $\eps^{-1}$ are equal to zero. Passing to the limit, as $\eps\to 0$ yields
$$
\int\limits_0^T\!\!\int\limits_{\mathbb R^n}\!\mathcal{Z}^0(\partial_t\varphi +\mathrm{div}(a^{\rm eff}\nabla\varphi))\,dxdt=
\int\limits_0^T\!\!\int\limits_{\mathbb R^n}\!\big(\rho\varphi\partial_t v^0_{lm} - \rho v^0_{lm}\mathrm{div}(a^{\rm eff}\nabla\varphi)\big)\,dxdt
$$
Since $v^0_{lm}$ solves the effective equation, the integral on the right-hand side is equal to zero. Therefore,
$$
\partial_t\mathcal{Z}^0-\mathrm{div}(a^{\rm eff}\nabla\mathcal{Z}^0)=0.
$$
Since $\mathcal{Z}^0(x,0)=0$, we conclude that  $\mathcal{Z}^0=0$.\\
By the density arguments, $\mathcal{Z}^0=0$ for any continuous $\rho$.  Due to the tightness of the family $\big\{\eps\chi^{ij}_{2,1}\big(\frac{t}{\eps^2}\big)\big\}$
in $C[0,T]$ this implies that $Z^\eps$ converges to zero in probability in $L^2(\mathbb R^n\times(0,T))$, and (\ref{o_to_e}) follows.

It remains to  pass to the limit in (\ref{explicit}) and check that the limit process satisfies (\ref{aux_spde}). Due to (\ref{inv_p}) and (\ref{explicit}),
 $\check V^{\eps}$ converges in law in $C(0,T;L^2(\mathbb R^n))$ to the process $\Lambda^{1/2}W_\cdot v^0$ with $n^2$-dimensional Wiener process
 $W_t$. Recalling the definition of $v^0_{ij}$, we obtain the desired convergence.
\end{proof}

\bigskip
We proceed with equation $(\eps^1)$. Its right-hand side can be rearranged as follows:
$$
-\partial_t u^1
+\mathrm{div}_x\big(a(z,s) \nabla_x u^1\big)\ \\[1.3mm]
+\mathrm{div}_z\big(a(z,s) \nabla_x v^2\big)
+\mathrm{div}_x\big(a(z,s) \nabla_z v^2\big)
$$
$$
=\big\{-a^{\rm eff}\otimes \chi(z,s)+a(z,s)\otimes \chi(z,s)
+\mathrm{div}_z[a(z,s)\otimes\chi_{2,2}(z,s)]
$$
$$
+a(z,s)\nabla_z\chi_{2,2}(z,s)
\big\}\frac{\partial^3}{\partial x^3}u^0(x,t)
:=\Psi_{3}(z,s)
\frac{\partial^3}{\partial x^3}u^0(x,t);
$$
here and in what follows  the symbol $\frac{\partial^3}{\partial x^3}u^0(x,t)$ stands for the tensor of third order
partial derivatives of  $u^0$, that is  $\frac{\partial^3}{\partial x^3}=\big\{\frac{\partial^3}{\partial x^i \partial x^j\partial x^k}\big\}_{i,j,k=1}^n$; we have also denoted
$$
a(z,s)\otimes \chi(z,s)=\big\{a^{ij}(z,s)\chi^k(z,s)\big\}_{i,j,k=1}^n
$$
and
$$
 \mathrm{div}_z[a(z,s)\otimes\chi_{2,2}(z,s)]=
\big\{\partial_{z^i}[a^{ij}(z,s)\chi_{2,2}^{kl}(z,s)]\big\}_{j,k,l=1}^n.
$$
We introduce the following constant tensor $\mu=\{\mu^{ijk}\}_{i,j,k=1}^n$:
$$
\mu=\E\int_{\mathbb T^n}\big\{-a^{\rm eff}\otimes \chi(z,s)+a(z,s)\otimes \chi(z,s)+a(z,s)\nabla_z\chi\big._{2,2}(z,s)
\big\}dz
$$
with $a(z,s)\nabla_z\chi\big._{2,2}(z,s)=\{a^{ij}(z,s)\partial_{z^l}\chi^{lk}_{2,2}(z,s)\}_{i,j,k=1}^n$,
and consider the following problems:
\begin{equation}  \label{eq-aux101}
\left\{\! \begin{array}{lcl} \displaystyle
\frac{\partial\Xi_{\eps,1}}{\partial t}& \!\!\!=\!\!\! &\displaystyle \mathrm{div}\Big( a\Big( \frac{x}{\eps}, \frac{t}{\eps^2} \Big)\nabla \Xi_{\eps,1}\Big)+\Big(\Psi_{3}\big(\frac{x}{\eps},\frac{t}{\eps^2}\big)-\mu\Big)
\frac{\partial^3}{\partial x^3}u^0(x,t)   \\[4mm]
\Xi_{\eps,1} (x,0) & \!\!\!=\!\!\! & 0,
\end{array} \right.
\end{equation}
and
\begin{equation} \label{eq:aux102}
\left\{ \begin{array}{lcl} \displaystyle
\frac{\partial\Xi_{\eps,2}}{\partial t}& = &\displaystyle \mathrm{div}\Big( a\Big( \frac{x}{\eps}, \frac{t}{\eps^2} \Big)\nabla \Xi_{\eps,2}\Big)+\mu
\frac{\partial^3}{\partial x^3}u^0(x,t)   \\[4mm]
\Xi_{\eps,2} (0,x) & = & 0.
\end{array} \right.
\end{equation}
\begin{lemma}\label{l_small_h1}
The solution of problem (\ref{eq-aux101}) tends to zero a.s., as $\eps\to0$, in $L^2(\mathbb R^n\times[0,T])$. Moreover,
$$
\lim\limits_{\eps\to0}\E\big(\|\Xi_{\eps,1}\|^2_{L^2(\mathbb R^n\times[0,T])}\big)=0.
$$
\end{lemma}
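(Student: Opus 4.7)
The key idea is to split the forcing $(\Psi_3(x/\eps,t/\eps^2)-\mu)\partial_x^3 u^0$ according to its torus average and treat the two pieces separately. Unlike in Lemma \ref{l_stochpde}, here an extra factor of $\eps$ is available, so no elaborate dual test function argument is required: elementary energy estimates combined with Birkhoff's ergodic theorem will suffice. Write
$$
\Psi_{3,1}(s):=\int_{\mathbb T^n}\!\Psi_3(z,s)\,dz - \mu,\qquad \Psi_{3,2}(z,s):=\Psi_3(z,s)-\int_{\mathbb T^n}\!\Psi_3(\hat z,s)\,d\hat z,
$$
so that $\Psi_3-\mu=\Psi_{3,1}+\Psi_{3,2}$, where $\Psi_{3,1}$ is a stationary, zero-mean, $s$-only process (by the definition of $\mu$) and $\Psi_{3,2}$ has zero $\mathbb T^n$-mean. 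By linearity of \eqref{eq-aux101} it suffices to prove the assertion separately for $\Xi^a_{\eps,1}$ and $\Xi^b_{\eps,1}$, solving the equation with forcings $\Psi_{3,1}(t/\eps^2)\partial_x^3 u^0$ and $\Psi_{3,2}(x/\eps,t/\eps^2)\partial_x^3 u^0$ respectively.

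For $\Xi^b_{\eps,1}$, invoke Lemma \ref{l_exist_stat} and Corollary \ref{cor_t1} to produce a stationary, deterministically bounded solution $N(z,s)$ of $\partial_s N-\mathrm{div}_z(a\nabla_z N)=\Psi_{3,2}$, and try the ansatz $\tilde\Xi^b(x,t):=\eps^2 N(x/\eps,t/\eps^2)\,\partial_x^3 u^0(x,t)$. A direct chain-rule computation, using the rescaling formulae of Section \ref{s_asy} together with \eqref{chi_est}, \eqref{chi02_est} and the regularity/decay of $u^0$ provided by {\bf H2}, shows that $\tilde\Xi^b$ satisfies the target equation up to a remainder of the form $\eps\,\mathrm{div}_x(\cdots)+\eps^2(\cdots)$ bounded in $L^2(0,T;H^{-1}(\mathbb R^n))$ with a deterministic constant. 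The standard parabolic energy inequality then gives $\|\Xi^b_{\eps,1}-\tilde\Xi^b\|_{L^2(\mathbb R^n\times(0,T))}\le C\eps$ deterministically, and since $\|\tilde\Xi^b\|_{L^2}\le C\eps^2$, we conclude $\|\Xi^b_{\eps,1}\|_{L^2}\le C\eps$.

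For $\Xi^a_{\eps,1}$, set $\chi_{3,1}(s):=\int_0^s\Psi_{3,1}(r)\,dr$ and $\check\Xi^\eps(x,t):=\eps^2\chi_{3,1}(t/\eps^2)\,v^0(x,t)$ with $v^0:=\partial_x^3 u^0$. The constant-coefficient identity $\partial_t v^0=\mathrm{div}(a^{\rm eff}\nabla v^0)$, obtained by differentiating \eqref{hom_cauchy}, shows that $Z^\eps:=\Xi^a_{\eps,1}-\check\Xi^\eps$ solves
$$
\partial_t Z^\eps-\mathrm{div}(a^\eps\nabla Z^\eps)=\eps^2\chi_{3,1}(t/\eps^2)\,\mathrm{div}\bigl((a^\eps-a^{\rm eff})\nabla v^0\bigr),\qquad Z^\eps(x,0)=0,
$$
where $a^\eps(x,t)=a(x/\eps,t/\eps^2)$; the right-hand side is the divergence of a vector field uniformly bounded in $L^2(\mathbb R^n\times(0,T))$ (by {\bf H2}) multiplied by the scalar $\eps^2\chi_{3,1}(t/\eps^2)$. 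The energy inequality yields $\|Z^\eps\|_{L^2(\mathbb R^n\times(0,T))}\le C\sup_{t\in[0,T]}|\eps^2\chi_{3,1}(t/\eps^2)|$, and the same scalar bounds $\|\check\Xi^\eps\|_{L^2}$. Since $\Psi_{3,1}$ is stationary, bounded and centered, Birkhoff's ergodic theorem applied to $\chi_{3,1}(s)/s$ gives $\chi_{3,1}(s)=o(s)$ a.s., whence $\eps^2\sup_{t\in[0,T]}|\chi_{3,1}(t/\eps^2)|\to 0$ a.s., proving $\Xi^a_{\eps,1}\to 0$ a.s. in $L^2(\mathbb R^n\times(0,T))$. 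The convergence $\E\|\Xi_{\eps,1}\|^2_{L^2}\to 0$ follows from the a.s.\ convergence combined with uniform integrability: a direct $L^2$-energy estimate on \eqref{eq-aux101} together with the $L^p$ invariance-principle-type moment bound $\sup_\eps\E\sup_{[0,T]}|\eps\chi_{3,1}(t/\eps^2)|^p<\infty$ (a consequence of {\bf H3} through Lemma \ref{l_mixi}) gives the required uniform integrability, and Vitali's theorem concludes.

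The main obstacle is the careful bookkeeping of remainders in the ansatz $\tilde\Xi^b$, i.e., verifying that all error terms generated by the chain rule decompose into a divergence of an $O(\eps)$ field plus smooth $O(\eps^2)$ terms --- tedious but routine using only \eqref{chi_est}, \eqref{chi02_est} and {\bf H2}. The genuinely \emph{essential} point is the algebraic observation $\partial_t v^0=\mathrm{div}(a^{\rm eff}\nabla v^0)$, which converts the otherwise problematic $\partial_t v^0$ in the equation for $Z^\eps$ into a divergence of an $L^2$-bounded field; without this cancellation the energy estimate would not close.
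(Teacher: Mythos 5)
Your proof is correct, but it follows a genuinely different route from the paper's. The paper does not split $\Psi_3-\mu$ by its torus average; it isolates the exact spatial divergence $\mathrm{div}_z[a\otimes\chi_{2,2}]$, whose rescaling contributes an $O(\eps)$ forcing in $L^2(0,T;H^{-1}(\mathbb R^n))$ and is killed by the energy estimate, and then treats the remaining zero-expectation stationary field $(a-a^{\rm eff})\otimes\chi+a\nabla_z\chi_{2,2}-\mu$ by a soft argument: deterministic energy bounds give compactness of $\Xi^2_{\eps,1}$ in $L^2(\mathbb R^n\times(0,T))$ (via \cite{Lio} plus Aronson's estimate to control the tails), Birkhoff gives a.s.\ weak convergence of the forcing to zero, and passing to the limit identifies the limit as the solution of the homogeneous homogenized problem, i.e.\ zero. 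You instead push the entire $z$-fluctuation into an additional corrector $N$ via Lemma \ref{l_exist_stat} and handle the purely temporal fluctuation $\Psi_{3,1}(s)$ by an explicit ansatz plus the scalar ergodic theorem $\chi_{3,1}(s)/s\to0$. Your route buys an explicit deterministic $O(\eps)$ rate for the spatially fluctuating part and avoids the compactness/Aronson machinery and the passage to the limit in the $\eps$-dependent operator; the paper's route avoids constructing an extra corrector and the attendant remainder bookkeeping, at the price of giving no rate. Two small corrections to your write-up: Corollary \ref{cor_t1} does not apply to $N$, since your $\Psi_{3,2}$ contains $a\nabla_z\chi_{2,2}$, which by \eqref{chi02_est} is only locally square-integrable, not in $W^{-1,\infty}(\mathbb T^n)$ uniformly in $s$ --- but you never actually need $N\in L^\infty$; the bounds $N\in L^\infty(\mathbb R;L^2(\mathbb T^n))\cap L^2_{\rm loc}(\mathbb R;H^1(\mathbb T^n))$ from Lemma \ref{l_exist_stat} suffice for all your estimates. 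And for the convergence of expectations, the heavy maximal-inequality/Vitali step is unnecessary: Cauchy--Schwarz and the deterministic bound $\|\Psi_{3,1}\|_{L^2(s,s+1)}\le C$ give $|\chi_{3,1}(s)|\le Cs$ pathwise, so $\|\Xi_{\eps,1}\|_{L^2(\mathbb R^n\times(0,T))}$ is bounded by a deterministic constant and dominated convergence concludes, exactly as in the paper.
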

\begin{proof}
Splitting further the term  $(\Psi_{3}-\mu)$ on the right-hand side of (\ref{eq-aux101}) into two parts
$$
\begin{array}{rl}
\displaystyle
\Psi_{3}(z,s)-\mu\!\!&\!\!=\mathrm{div}_z[a(z,s)\otimes\chi\big._{2,2}(z,s)]
\\[2mm] &\!\! \displaystyle
+\big\{(a(z,s)-a^{\rm eff})\otimes \chi(z,s)
+a(z,s)\nabla_z\chi\big._{2,2}(z,s)-\mu
\big\}
\\[2mm] &\!\! \displaystyle
=\Psi_{3,1}(z,s)+(\Psi_{3,2}(z,s)-\mu),
\end{array}
$$
we represent the solution $\Xi_{\eps,1}$ as the sum $\Xi^1_{\eps,1}$ and $\Xi^2_{\eps,1}$, respectively.

Since the right-hand side $g$ in (\ref{ory_cauchy}) satisfies condition {\bf H2}, the entries of $\frac{\partial^3}{\partial x^3}u^0$ are $C^1(\mathbb R^n)$ functions, and, moreover,  for any $K>0$ there exists $C_K$ such that
$$
\Big|\frac{\partial^3}{\partial x^3}u^0(x,t)\Big|+\Big|\frac{\partial^4}{\partial x^4}u^0(x,t)\Big|\leq C_K(1+|x|)^{-K} ,\qquad t\in[0,T].
$$
Combining this with (\ref{chi02_est}) we conclude that
$$
\Big\|\Psi_{3,1}\Big(\frac x\eps.\frac{t}{\eps^2}\Big)\frac{\partial^3}{\partial x^3}u_0(x,t)\Big\|_{L^2([0,T];H^{-1}(\mathbb R^n))}\le C\eps.
$$
Therefore,
\begin{equation}\label{xi_esti1}
\|\Xi^1_{\eps,1}\|_{L^2([0,T];H^1(\mathbb R^n))}\le C\eps.
\end{equation}
Due to (\ref{chi02_est}) and the properties of $u_0$, we have
$$
\big\|\Big(\Psi_{3,2}\Big(\frac{x}{\eps},\frac{t}{\eps^2}\Big)-\mu\Big)
\frac{\partial^3}{\partial x^3}u_0\Big\|_{L^2(\mathbb R^n\times(0,T))}\le C
$$
with a deterministic $C$. Using Theorem 1.5.1 in \cite{Lio} we derive from this estimate that a.s. the family $\Xi_{\eps,1}^2$ is compact in $L^2((0,T);L^2_{\mathrm{loc}}(\mathbb R^n))$.  Considering condition
{\bf H2} and Aronson's estimate (see \cite{Aro}), we then  conclude that the family $\Xi_{\eps,1}^2$
is compact in $L^2(\mathbb R^n\times(0,T))$.

By the Birkhoff ergodic theorem, the function $\big(\Psi_{3,2}\big(\frac{x}{\eps},\frac{t}{\eps^2}\big)-\mu\big)
\frac{\partial^3}{\partial x^3}u_0$  converges a.s. to zero weakly in $L^2(\mathbb R^n\times(0,T))$. Combining this with the above compactness arguments, we conclude that a.s. $\Xi_{\eps,1}^2$ converges to zero in $L^2(\mathbb R^n\times(0,T))$. Then in view of (\ref{xi_esti1}),  $\Xi_{\eps,1}$ tends to zero in $L^2(\mathbb R^n\times(0,T))$ a.s. This yields the first statement of the lemma. The second
statement follows from the first one by the Lebesgue dominated convergence theorem.
 \end{proof}

\bigskip
According to \cite{JKO82}, problem (\ref{eq:aux102}) admits homogenization. In particular, $\Xi_{\eps,2}$ converges a.s.
in $L^2(\mathbb R^n\times(0,T))$  to a solution of the following problem:
\begin{equation}\label{eq_homaux}
\left\{ \begin{array}{lcl} \displaystyle
\frac{\partial\Xi_{0,2}}{\partial t}& = &\displaystyle \mathrm{div}\big( a^{\rm eff}\nabla \Xi_{0,2}\big)+\mu
\frac{\partial^3}{\partial x^3}u^0(x,t)   \\[4mm]
\Xi_{0,2} (0,x) & = & 0
\end{array} \right.
\end{equation}

This is not the end of the story with the asymptotic expansion
because the initial condition is not satisfied at the level
$\eps^1$. In order to fix this problem we introduce one more term of order $\eps^1$ so that the expansion takes the form
\begin{equation}\label{asy_modif_ini}
\begin{array}{rl}
\displaystyle
u^\eps(x,t)
=
u^0(x,t)\!\!&\!\! \displaystyle +\eps\Big\{ \chi\Big(\frac{x}{\eps},\frac{t}{\eps^2}\Big)
+\chi_{\rm il}\Big(\frac{x}{\eps},\frac{t}{\eps^2}\Big)\Big\}\nabla u^0(x,t)
\\[2mm]
&\!\!  \displaystyle
+V^{\eps,1} +\eps^2 v^2\Big(x,t,\frac{x}{\eps},\frac{t}{\eps^2}\Big)
+\dots
\end{array}
\end{equation}
The initial layer type function $\chi_{\rm il}$ has been added in order to compensate the discrepancy in the initial condition. This function solves the following problem:
\begin{equation}\label{ini_aux}
\left\{ \begin{array}{lcl} \displaystyle
\frac{\partial\chi_{\rm il}}{\partial s}& = &\displaystyle \mathrm{div}\big( a(z,s)\nabla \chi_{\rm il}\big)   \\[4mm]
\chi_{\rm il}(0,z) & = & -\chi(0,z).
\end{array} \right.
\end{equation}
\begin{lemma}
\label{l_inila}
The solution of problem  (\ref{ini_aux}) decays exponentially
as $s\to\infty$. We have
$$
\|\chi\big._{\rm il}(\cdot,s)\|\big._{L^\infty(\mathbb T^n)}\leq Ce^{-\nu s},\quad
\|\chi\big._{\rm il}\|\big._{L^\infty([s,s+1];H^1(\mathbb T^n))}\leq Ce^{-\nu s}
$$
\end{lemma}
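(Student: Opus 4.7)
The approach rests on two observations about problem (\ref{ini_aux}). First, the initial datum $-\chi(0,z)$ has zero spatial mean by the normalization (\ref{norm_coo}). Second, (\ref{ini_aux}) preserves the spatial mean on the torus: integrating the equation over $\mathbb T^n$ gives $\frac{d}{ds}\int_{\mathbb T^n}\chi_{\rm il}(z,s)\,dz=\int_{\mathbb T^n}\mathrm{div}(a(z,s)\nabla\chi_{\rm il})\,dz=0$. Hence $\int_{\mathbb T^n}\chi_{\rm il}(z,s)\,dz=0$ for all $s\geq 0$, which opens the door to the Poincaré inequality on $\mathbb T^n$.

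With that in place, I would multiply (\ref{ini_aux}) by $\chi_{\rm il}$, integrate by parts in $z$, and combine the uniform ellipticity from {\bf H1} with the Poincaré inequality (valid on the zero-mean subspace) to get
$$
\frac{d}{ds}\|\chi_{\rm il}(\cdot,s)\|^2_{L^2(\mathbb T^n)}\leq -2\nu\,\|\chi_{\rm il}(\cdot,s)\|^2_{L^2(\mathbb T^n)}
$$
with a deterministic $\nu>0$ depending only on $\lambda$ and on the Poincaré constant of $\mathbb T^n$. Gronwall's inequality together with the uniform bound $\|\chi(0,\cdot)\|_{L^\infty(\mathbb T^n)}\leq C$ coming from (\ref{chi_est}) then yields the exponential $L^2$ decay $\|\chi_{\rm il}(\cdot,s)\|_{L^2(\mathbb T^n)}\leq Ce^{-\nu s}$.

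To upgrade the $L^2$ bound to the $L^\infty$ and $L^\infty_tH^1_x$ estimates in the statement, I would apply standard parabolic smoothing on a unit time window preceding $s$. Since $\chi_{\rm il}$ solves a homogeneous uniformly parabolic equation with bounded coefficients, the Nash--De Giorgi--Moser type estimates used in the proof of Corollary \ref{cor_t1} (see \cite[Theorem VII.3.1]{GT}), combined with the usual energy inequality, give
$$
\|\chi_{\rm il}(\cdot,s)\|_{L^\infty(\mathbb T^n)}+\|\chi_{\rm il}\|_{L^\infty([s,s+1];H^1(\mathbb T^n))}\leq C\|\chi_{\rm il}(\cdot,s-1)\|_{L^2(\mathbb T^n)}\leq Ce^{-\nu s}
$$
for $s\geq 1$, and the estimate for $s\in[0,1]$ is immediate from the $L^\infty$ bound on $\chi(0,\cdot)$ and standard short-time parabolic regularity. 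All constants are deterministic since the ellipticity bounds in {\bf H1} are.

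There is no substantial obstacle here: the argument is a textbook combination of mean conservation, Poincaré, and parabolic smoothing on a compact manifold. The one point worth stressing is that the normalization (\ref{norm_coo}) is essential — without the zero-mean property, $\chi_{\rm il}$ would relax to a nonzero constant and exponential decay would fail.
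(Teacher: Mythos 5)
Your argument is correct in substance but follows a different mechanism than the paper. The paper's proof (one sentence, deferring to \cite{KP02}) obtains the exponential decay from the conservation of the spatial mean combined with the \emph{maximum principle and the parabolic Harnack inequality}: the maximum of $\chi_{\rm il}$ is nonincreasing and the minimum nondecreasing, Harnack forces the oscillation to contract by a fixed factor on each unit time interval, and the zero mean converts oscillation decay into decay of $\|\chi_{\rm il}\|_{L^\infty}$ itself --- this is the same scheme that underlies estimate (\ref{green_est1}) for the Green function. You instead run the energy/Poincar\'e route: zero mean plus uniform ellipticity gives $\frac{d}{ds}\|\chi_{\rm il}\|^2_{L^2}\le -2\nu\|\chi_{\rm il}\|^2_{L^2}$, hence exponential $L^2$ decay, which you then upgrade to $L^\infty$ by Nash--De Giorgi--Moser local boundedness on a unit time window. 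Both routes are legitimate for divergence-form operators with merely bounded measurable coefficients, and both hinge on the same key observation you correctly isolate (mean conservation via (\ref{norm_coo})). Your version is arguably more self-contained, since it produces an explicit rate $\nu$ from $\lambda$ and the Poincar\'e constant and only invokes De Giorgi--Nash theory at the final upgrade step; the paper's version gets the $L^\infty$ decay in one stroke but leans on the Harnack inequality, which is of comparable depth. One small caveat, which applies to the paper's one-line proof as well: the stated norm $L^\infty([s,s+1];H^1(\mathbb T^n))$ is stronger than what the ``usual energy inequality'' (Caccioppoli) delivers, namely $L^2([s,s+1];H^1(\mathbb T^n))$; with coefficients only measurable in $s$ the $L^\infty_t H^1_z$ bound is not automatic, and the $L^2_t H^1_z$ version is what is consistent with Lemma \ref{l_exist_stat} and what is actually used later in the proof of Theorem \ref{t_main1}. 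If you keep the statement as written you should either justify the supremum in time or weaken the norm to $L^2$ in $s$.
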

\begin{proof}
The desired statement is an immediate consequence of  the fact that
$\int_{\mathbb T^n}\Big. \chi_{\rm il}(z,s)dz=\int_{\mathbb T^n} \chi_{\rm il}(z,0)dz=0$, the maximum principle
and the parabolic Harnack inequality (see \cite{KP02} for further details).
\end{proof}

\section{Main results}\label{sec6}

In this section we present the main result. Consider the expression
\begin{equation}\label{d_ucap0}
U^\eps(x,t)=\frac{u^\eps(x,t)-u^0(x,t)}{\eps}-\chi\Big(\frac x\eps,\frac{t}{\eps^2}\Big)\nabla_x u^0.
\end{equation}
It is easily seen that $U^\eps$ is equal to the normalized difference between $u^\eps$ and the first two terms of the asymptotic expansion.
The limit behaviour of $U^\eps$ is described by the following
statement.
\begin{thm}\label{t_main1}
Under the assumptions {\bf H1}--{\bf H3}
the function $U^\eps$ converges in law, as $\eps\to0$, in the space $L^2(\mathbb R^n\times(0,T))$ to a solution of the following SPDE
\begin{equation}\label{lim_spde}
\begin{array}{c}
\displaystyle
d U^0=\mathrm{div}\Big(a^{\rm eff}\nabla U^0+\mu \frac{\partial^3}{\partial x^3}u^0\Big)dt+\Lambda^{1/2}\frac{\partial^2}{\partial x^2}u^0\,dW_t,\\[4mm]
U^0(x,0)=0.
\end{array}
\end{equation}
\end{thm}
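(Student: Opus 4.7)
The plan is to promote the formal two-scale expansion of Section~\ref{s_asy} into a rigorous asymptotic one. I would take as approximant
$$
w^\eps = u^0 + \eps\bigl(\chi^\eps + \chi_{\rm il}^\eps\bigr)\nabla u^0 + V^{\eps,1} + \eps\,\Xi_{\eps,1} + \eps\,\Xi_{\eps,2} + \eps^2 v^{2,\eps},
$$
where the superscript $\eps$ denotes evaluation at $(x/\eps,t/\eps^2)$ and $v^{2,\eps}(x,t) := \chi_{2,2}^{ij}(x/\eps,t/\eps^2)\partial^2_{ij}u^0$. By construction of each ingredient---the cell equation for $\chi$, equation~\eqref{eq_chi22} for $\chi_{2,2}$, the initial-layer problem~\eqref{ini_aux}, and the three auxiliary Cauchy problems~\eqref{eq:aux004}, \eqref{eq-aux101}, \eqref{eq:aux102}---direct substitution into $L^\eps := \partial_t - \mathrm{div}(a(x/\eps,t/\eps^2)\nabla\cdot)$ cancels the contributions of orders $\eps^{-1}$ and $\eps^0$, while $\chi_{\rm il}(z,0)=-\chi(z,0)$ enforces $w^\eps(x,0)=g(x)=u^\eps(x,0)$.

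The next step is to prove that the remainder $r^\eps := u^\eps - w^\eps$ satisfies $\eps^{-1}\|r^\eps\|_{L^2(\mathbb{R}^n\times(0,T))}\to 0$ in probability. After the above cancellations, $L^\eps w^\eps$ is a sum of divergence-form source terms of size $\eps$ in $L^2(0,T;H^{-1}(\mathbb{R}^n))$: the ``bulk'' pieces involve $a^\eps\otimes\chi_{2,2}^\eps$, $a^\eps\otimes\chi^\eps$ and higher derivatives of $u^0$, controlled by the deterministic bounds~\eqref{chi_est}, \eqref{chi02_est} together with the weighted decay of hypothesis~H2, while the initial-layer contributions are of order $\eps$ in $L^2(0,T)$ thanks to the exponential decay in Lemma~\ref{l_inila}. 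Parabolic energy estimates then give $\|r^\eps\|_{L^\infty(0,T;L^2)\cap L^2(0,T;H^1)}=O(\eps)$; the improvement to $o(\eps)$ is obtained by splitting the source further and invoking the Birkhoff ergodic theorem together with the homogenization argument used in the proof of Lemma~\ref{l_small_h1}. Substituting $u^\eps=w^\eps+r^\eps$ into \eqref{d_ucap0} then yields
$$
U^\eps = \chi_{\rm il}^\eps\nabla u^0 + \eps^{-1}V^{\eps,1} + \Xi_{\eps,1} + \Xi_{\eps,2} + \eps\,v^{2,\eps} + \eps^{-1}r^\eps.
$$
Lemma~\ref{l_inila} sends $\chi_{\rm il}^\eps\nabla u^0\to 0$ in $L^2$; Lemma~\ref{l_small_h1} gives $\Xi_{\eps,1}\to 0$ in $L^2$; the classical a.s.\ homogenization of \cite{JKO82} applied to \eqref{eq:aux102} yields $\Xi_{\eps,2}\to\Xi_{0,2}$ in $L^2$; the bound~\eqref{chi02_est} kills $\eps v^{2,\eps}$; and the residual estimate above disposes of $\eps^{-1}r^\eps$. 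Finally, Lemma~\ref{l_stochpde} furnishes the convergence $\eps^{-1}V^{\eps,1}\to V^{0,1}$ in law in $C([0,T];L^2(\mathbb{R}^n))$.

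Combining the a.s.\ (or in-probability) vanishing of the auxiliary terms with the in-law convergence of $\eps^{-1}V^{\eps,1}$ via Slutsky's lemma shows that $U^\eps$ converges in law in $L^2(\mathbb{R}^n\times(0,T))$ to $U^0 := V^{0,1}+\Xi_{0,2}$. Adding \eqref{aux_spde} and \eqref{eq_homaux} identifies $U^0$ as a solution of \eqref{lim_spde} with zero initial datum, and well-posedness of this linear SPDE with constant deterministic second-order coefficient, smooth deterministic drift $\mu\partial^3_x u^0$ and additive finite-dimensional noise with smooth coefficient $\Lambda^{1/2}\partial^2_x u^0$ follows from the standard Lions variational framework, pinning down the limit law uniquely. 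The main obstacle I expect is the strengthening of the residual estimate from the naive $O(\eps)$ to $o(\eps)$: because $V^{\eps,1}$ is of order $\eps$ rather than $\eps^2$ (the invariance-principle scaling), it must be kept inside the expansion and cannot be absorbed into a remainder; the $o(\eps)$ bound requires invoking the Birkhoff ergodic theorem on top of energy estimates, as well as careful handling of the initial-layer piece concentrated in the time strip $t=O(\eps^2)$. A secondary but genuine subtlety is combining the a.s./in-probability limits with the in-law limit of $\eps^{-1}V^{\eps,1}$ on a common probability space, which is handled either by Skorokhod representation or by establishing joint tightness of $\bigl(U^\eps, \eps\chi_{2,1}(\cdot/\eps^2)\bigr)$ and identifying the limiting joint law via \eqref{inv_p}.
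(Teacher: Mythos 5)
Your proposal follows essentially the same route as the paper's proof: the same approximant built from $\chi$, $\chi_{\rm il}$, $V^{\eps,1}$, $\Xi_{\eps,1}$, $\Xi_{\eps,2}$ and $\chi_{2,2}$, the same residual estimate (the paper's $\mathcal{V}^\eps$ is exactly your $\eps^{-1}r^\eps$, handled by deriving its Cauchy problem and showing the source and initial data vanish), and the same identification of the limit as $V^{0,1}+\Xi_{0,2}$ via Lemmata \ref{l_stochpde}, \ref{l_small_h1} and \ref{l_inila}. The subtleties you flag --- the weakly-vanishing initial-layer source term $\eps^{-1}(a^\eps)\partial_z(\chi_{\rm il}^\eps)\partial_x\partial_x u^0$, which the paper treats with an auxiliary problem and the compactness argument of Lemma \ref{l_small_h1}, and the combination of a.s.\ limits with the in-law limit of $\eps^{-1}V^{\eps,1}$ --- are precisely the points the paper addresses.
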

\begin{proof}
We set
$$
\begin{array}{c}
\displaystyle
\mathcal{V}^\eps(x,t)= U^\eps(x,t)-\eps^{-1}V^{\eps,1}(x,t)- \Xi_{\eps,2}(x,t)\\[3mm] \displaystyle
-\chi\big._{\rm il}\Big(\frac{x}{\eps},\frac{t}{\eps^2}\Big)\nabla u^0(x,t) -\eps\chi\big._{2,2}\Big(\frac{x}{\eps},\frac{t}{\eps^2}\Big) \frac{\partial^2}{\partial x^2}u^0(x,t)-\Xi_{\eps,1}(x,t).
\end{array}
$$
Substituting this expression in (\ref{ory_cauchy}) for $u^\eps$ and combining the above equations, we obtain after straightforward computations that $\mathcal{V}^\eps$ satisfies the problem
$$
\begin{array}{c}\displaystyle
\frac{\partial}{\partial t}\mathcal{V}^\eps-
\mathrm{div}\Big(a\Big(\frac{x}{\eps},\frac{t}{\eps^2}\Big)\nabla
\mathcal{V}^\eps\Big)=R^\eps,\\[3mm]
\displaystyle
\mathcal{V}^\eps(x,0)=R_1^\eps
\end{array}
$$
with
$$
\begin{array}{rl}
\displaystyle
R^\eps=\!&\!\!\eps^{-1}\big\{\partial_{z_i}[(a^\eps)^{ij}(\chi_{\rm il}^\eps)^k]+(a^\eps)^{ji}\partial_{z^i}(\chi_{\rm il}^\eps)^k)\big\}\partial_{x^j}\partial_{x^k}u^0-(\chi^\eps)^j\partial_t\partial_{x^j}u^0\\[2mm]
-\!&\!\!(\chi_{\rm il}^\eps)^j\partial_t\partial_{x^j}u^0+\eps(a^\eps)^{ij}(\chi_{2,2}^\eps)^{lk}\partial_{x^i}
\partial_{x^j}\partial_{x^l}\partial_{x^k}u^0-\eps(\chi_{2,2}^\eps)^{ij}\partial_t\partial_{x^i}\partial_{x^j}u^0
\end{array}
$$
and
$$
R_1^\eps=\eps\chi_{2,2}\Big(\frac{x}{\eps},0\Big)\partial_x\partial_x u^0(x,0).
$$
It follows from Lemma \ref{l_inila} that
$$
\big\|\eps^{-1}\partial_{z_i}[(a^\eps)^{ij}(\chi_{\rm il}^\eps)^k]\partial_{x^j}\partial_{x^k}u^0\big\|_{L^2(0,T);H^{-1}(\mathbb R^n))}
+\big\|(\chi_{\rm il}^\eps)^j\partial_t\partial_{x^j}u^0\big\|_{L^2((0,T)\times\mathbb R^n)}\leq C\eps,
$$
By (\ref{chi_est}), (\ref{norm_coo})  we obtain
$$
\big\|(\chi^\eps)^j\partial_t\partial_{x^j}u^0\big\|_{L^2((0,T);H^{-1}(\mathbb R^n))}\le C\eps.
$$
Then by (\ref{chi02_est}) we have
$$
\big\|\eps(a^\eps)^{ij}(\chi_{2,2}^\eps)^{lk}\partial_{x^i}
\partial_{x^j}\partial_{x^l}\partial_{x^k}u^0\big\|_{L^2((0,T)\times\mathbb R^n)}+
\big\|\eps(\chi_{2,2}^\eps)^{ij}\partial_t\partial_{x^i}\partial_{x^j}u^0\big\|_{L^2((0,T)\times\mathbb R^n)}\leq C\eps
$$
and
$$
\big\|\eps\chi_{2,2}\Big(\frac{x}{\eps},0\Big)\partial_x\partial_x u^0(x,0)\big\|_{L^2(\mathbb R^n)}\le C\eps.
$$
It remains to estimate the contribution of the term $\eps^{-1}(a^\eps)^{ji}\partial_{z^i}(\chi_{\rm il}^\eps)^k)\partial_{x^j}\partial_{x^k}u^0$.
From the estimates of Lemma \ref{l_inila} it is easy to deduce that

$$
\big\|\eps^{-1}\big((a^\eps)^{ji}\partial_{z^i}(\chi_{\rm il}^\eps)^k\big)
\partial_{x^j}\partial_{x^k}u^0\big\|_{L^2((0,T)\times\mathbb R^n)}\leq C.
$$
and that a.s. the family $\big\{\eps^{-1}(a^\eps)^{ji}\partial_{z^i}(\chi_{\rm il}^\eps)^k)\partial_{x^j}\partial_{x^k}u^0\big\}$ converges to zero
weakly in $L^2((0,T)\times\mathbb R^n)$.
Then, using the same compactness arguments as those in the proof of Lemma \ref{l_small_h1} one can show that the solution of problem
$$
\left\{ \begin{array}{lcl} \displaystyle
\frac{\partial\Xi_{\eps,3}}{\partial t}& = &\displaystyle \mathrm{div}\Big( a\Big( \frac{x}{\eps}, \frac{t}{\eps^2} \Big)\nabla \Xi_{\eps,3}\Big)+
\eps^{-1}\big((a^\eps)\partial_{z}(\chi_{\rm il}^\eps)\big)\partial_{x}\partial_{x}u^0\ \\[4mm]
\Xi_{\eps,3} (0,x) & = & 0
\end{array} \right.
$$
converges a.s. to zero in $L^2((0,T)\times\mathbb R^n)$. Moreover,
 $$
\lim\limits_{\eps\to0}\E\big(\|\Xi_{\eps,3}\|^2_{L^2(\mathbb R^n\times[0,T])}\big)=0.
$$
Combining the above estimates we conclude that
$R^\eps$ a.s. tends to zero in $L^2(\mathbb R^n\times(0,T))$, as $\eps\to0$, and  $R_1^\eps$ a.s. tends to zero in $L^2(\mathbb R^n)$. Furthermore,
$$
\E\|R^\eps\|^2_{L^2(\mathbb R^n\times(0,T))}\,\to\, 0,\qquad
\E\|R_1^\eps\|^2_{L^2(\mathbb R^n)}\,\to\, 0.
$$

\medskip\noindent
By Lemmata \ref{l_small_h1} and \ref{l_inila} and estimate (\ref{chi02_est}) it follows that
$(U^\eps-\eps^{-1}V^{\eps,1}- \Xi_{\eps,2})$
tends a.s. to zero in $L^2(\mathbb R^n\times(0,T))$, and
$$
\E\|U^\eps-\eps^{-1}V^{\eps,1}(x,t)- \Xi_{\eps,2}(x,t)\|^2_{L^2(\mathbb R^n\times(0,T))}\,\to\,0.
$$
By Lemma \ref{l_stochpde} the function  $\eps^{-1}V^{\eps,1}$
converges in law to a solution of (\ref{aux_spde}). Also, $\Xi_{\eps,2}$ converges
a.s. to $\Xi_{0,2}$ in $L^2(\mathbb R^n\times(0,T))$.
This yields the convergence
$$
U^\eps \longrightarrow V^{0,1}+\Xi_{0,2}
$$
in law in the space $L^2(\mathbb R^n\times(0,T))$.
It remains to note that due to (\ref{aux_spde}) and (\ref{eq_homaux}) the random function $U^0:=(V^{0,1}+\Xi_{0,2})$ satisfies the stochastic PDE (\ref{lim_spde}) as required.
\end{proof}

\section{Diffusive case}\label{sec7_dif}

The goal of this section is to extend the statement of Theorem \ref{t_main1} to the diffusive case.

\begin{thm}\label{t_main1_dif}
Let assumptions  {\bf A1}--{\bf A3} be fulfilled.
Then the function $U^\eps$ defined in (\ref{d_ucap0}) converges in law, as $\eps\to0$, in the space $L^2(\mathbb R^n\times(0,T))$ to the solution of
(\ref{lim_spde})
\end{thm}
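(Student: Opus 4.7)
The plan is to transport the proof of Theorem \ref{t_main1} to the diffusive setting by replacing each application of the mixing-based inputs (condition {\bf H3} and Lemma \ref{l_mixi}) with its counterpart expressed through the infinitesimal generator $\cL$ under {\bf A1}--{\bf A3}. Concretely, every stationary-in-$s$ object of Section \ref{s_asy} (the correctors $\chi$ and $\chi_{2,2}$, the fluctuations $\Psi_{2,1}$ and $\Psi_{2,2}$, and the map $\chi_{2,1}$) is rewritten as a function of the driving diffusion $\xi_s$, and the existence and size of these objects are read off Poisson equations for $\cA+\cL$ and $\cL$.

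First I would set up the correctors. The cell problem (\ref{aux_alp2}) is replaced by (\ref{chi0_al=2}), namely $(\cA+\cL)\chi^0=-\mathrm{div}_z a$, so that $\chi(z,s):=\chi^0(z,\xi_s)$ is the correct substitute, and the effective matrix coincides with the one in Theorem \ref{thm:KP_95}. Analogously $\chi_{2,2}(z,s)=\chi^0_{2,2}(z,\xi_s)$ is obtained from $(\cA+\cL)\chi^0_{2,2}=\Psi_{2,2}$, with $\Psi_{2,2}$ the centered fluctuation appearing in (\ref{defpsi22}) (zero mean both in $z$ and under $\pi(y)\,dy$). Under {\bf A1}--{\bf A3} each of these Poisson equations admits a solution with polynomial growth in $y$ and the regularity in $z$ needed to reproduce the bounds (\ref{chi_est}) and (\ref{chi02_est}); the analysis of \cite{PV} provides the required estimates. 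The initial layer corrector $\chi_{\rm il}$ and Lemma \ref{l_inila} are unchanged, since their proof uses only the parabolic Harnack inequality applied pathwise.

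The decisive new step is the invariance principle (\ref{inv_p}) under {\bf A1}--{\bf A3}. Writing $\Psi_{2,1}(s)=\widetilde\Psi_{2,1}(\xi_s)$ with $\widetilde\Psi_{2,1}(y)=\int_{\mathbb T^n}\{a(z,y)(\mmI+\nabla_z\chi^0(z,y))-a^{\rm eff}\}\,dz$, the function $\widetilde\Psi_{2,1}$ is $\pi$-centered, so the Poisson equation $\cL F=\widetilde\Psi_{2,1}$ admits a solution $F$ whose gradient has finite second moment under $\pi$. It\^o's formula then yields
\begin{equation*}
\int_0^t \widetilde\Psi_{2,1}(\xi_s)\,ds \;=\; F(\xi_0)-F(\xi_t)+\int_0^t (\nabla F\,\sigma)(\xi_s)\,dW_s.
\end{equation*}
After rescaling $s\mapsto s/\eps^2$ and multiplying by $\eps$, the boundary term is negligible in $L^2$ thanks to the Lyapunov bound {\bf A3} and the stationarity of $\xi$, while the rescaled martingale converges to $\Lambda^{1/2}W_{\cdot}$ by the martingale central limit theorem, with $\Lambda$ given equivalently through the $\pi$-average of $(\nabla F\,\sigma)(\nabla F\,\sigma)^{\!\top}$ or through the integrated covariance of $\widetilde\Psi_{2,1}$. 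This yields (\ref{inv_p}), whence Lemma \ref{l_stochpde} applies verbatim.

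With these ingredients the architecture of Section \ref{sec6} is preserved: the decomposition of $U^\eps$ into $\eps^{-1}V^{\eps,1}+\Xi_{\eps,2}$ plus remainders, the compactness argument of Lemma \ref{l_small_h1} (whose Birkhoff input is replaced by ergodicity of $\xi$ under $\pi$), and the homogenization of problem (\ref{eq:aux102}) all go through. The main obstacle is precisely the Poisson equation step: producing $F$ and its analogues with enough moment control on $\nabla F$ and on $F(\xi_{t/\eps^2})$ to close the $L^2(\Omega;L^2(\R^n\times(0,T)))$ bounds on the various remainders, given that $b$ is only polynomially bounded. This is where the Lyapunov condition {\bf A3} combines with the uniform ellipticity {\bf A2} of $\cL$; once the corresponding resolvent bounds are recorded, the estimates mirror those of Section \ref{sec6} and the SPDE (\ref{lim_spde}) is obtained.
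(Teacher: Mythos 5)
Your overall architecture agrees with the paper's: the only place {\bf H3} enters the proof of Theorem \ref{t_main1} is in justifying the invariance principle (\ref{inv_p}), so it suffices to reprove that convergence under {\bf A1}--{\bf A3} and then repeat the rest verbatim. The gap is in how you reprove it. You write $\Psi_{2,1}(s)=\widetilde\Psi_{2,1}(\xi_s)$ with $\widetilde\Psi_{2,1}$ built from a corrector $\chi^0(z,y)$ solving $(\cA+\cL)\chi^0=-\mathrm{div}_z a$, and then run the Poisson-equation/martingale CLT for the additive functional $\int_0^t\widetilde\Psi_{2,1}(\xi_s)\,ds$. But the $\chi$ entering the definition of $U^\eps$ in (\ref{d_ucap0}), and hence the $\Psi_{2,1}$ of (\ref{defpsi21}), is the stationary solution of the \emph{pathwise} parabolic cell problem (\ref{aux_alp2}) with $a(z,s)=\tilde a(z,\xi_s)$; by the Green-function representation in Lemma \ref{l_exist_stat} it depends on the whole past trajectory $\{\xi_r:\ r\le s\}$, not on $\xi_s$ alone. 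The substitute $\chi^0(z,\xi_s)$ does not solve (\ref{aux_alp2}): by It\^o's formula it satisfies a backward-in-$s$ equation plus a martingale term $\nabla_y\chi^0\,\sigma(\xi_s)\,dW_s$, so replacing $\chi$ by $\chi^0(z,\xi_\cdot)$ both changes the quantity $U^\eps$ whose limit the theorem describes and injects uncontrolled martingale terms at every order of the expansion of Section \ref{s_asy}. Consequently $\int_0^t\Psi_{2,1}(s)\,ds$ is not an additive functional of the Markov process $\xi$, and the It\^o identity you invoke for it is not available as written.

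The paper closes exactly this gap with Lemma \ref{l_mixi_dif}: following the scheme of Lemma \ref{l_mixi}, one splits $\chi=\widehat\chi^1+\widehat\chi^2$ by cutting the Green-function representation at time $s/2$; the far-past part contributes $O(e^{-\nu s/2})$ by (\ref{green_est1}), while the recent part is $\mathcal{F}_{\geq s/2}$-measurable, so the Markov property turns its conditional expectation given $\mathcal{F}_{\leq 0}$ into $\E\{\mathcal{R}(\xi_{s/2})\mid\mathcal{F}_{\leq 0}\}$, which decays polynomially by the Pardoux--Veretennikov estimates under {\bf A1}--{\bf A3}. This yields an integrable correlation-type bound and hence (\ref{inv_p}) via \cite[Theorem VIII.3.79]{JSh}, with $\Lambda$ still given by the integrated autocovariance of the pathwise $\Psi_{2,1}$. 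If you want to keep a Poisson-equation route, you must first show that the pathwise $\Psi_{2,1}(s)$ differs from a Markovian functional of $\xi_s$ by a term whose time integral over $[0,t/\eps^2]$ is $o(\eps^{-1})$ in $L^2$, and then verify that your candidate $\Lambda$ (the $\pi$-average of $(\nabla F\sigma)(\nabla F\sigma)^{\top}$) coincides with the one appearing in (\ref{lim_spde}); neither step is carried out in the proposal.
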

\begin{proof}
The arguments used in the proof of Theorem \ref{t_main1} also apply in the case under consideration.  We  used assumption {\bf H3} only once, when
 justified convergence (\ref{inv_p}).  Thus, this convergence should be reproved under our standing assumptions.

\begin{lmm}\label{l_mixi_dif}
Under assumptions {\bf A1}--{\bf A3} for any $K>0$ there exists $C_K$ such that the following estimate holds
$$
\|\E\{\Psi_{2,1}(s)\,|\, \mathcal{F}_{\leq 0}\}\|\big._{L^2(\Omega)}\leq C_K\big(e^{-\nu s/2}+(1+s)^{-K}\big), \qquad
\nu>0Б
$$
the function $\Psi_{2,1}$ has been defined in (\ref{defpsi21})
\end{lmm}
\begin{proof}
We follow the scheme of proof of Lemma \ref{l_mixi}.
Denote
$$
\chi(z,s)=\widehat\chi^1(z,s)+\widehat\chi^2(z,s)=
$$
$$
\int\limits_{-\infty}^{s/2}\int\limits_{\mathbb T^n}\big(\mathcal{G}(z,\hat z,s,\hat s)-1\big)\mathrm{div}_za(\hat z,\xi_{\hat s})\,d\hat zd\hat s
+\int\limits^{s}_{s/2}\int\limits_{\mathbb T^n}\big(\mathcal{G}(z,\hat z,s,\hat s)\big)\mathrm{div}_za(\hat z,\xi_{\hat s})\,d\hat zd\hat s.
$$
Then $\Psi_ {2,1}(s)=\widehat\Psi^1(s)+\widehat\Psi^2(s)$ with
$$
 \widehat\Psi^i(s)=\int_{\mathbb T^n}\big(a(z,\xi_s)\nabla_z\widehat\chi^i(z,s)-\E(a(z,\xi_s)\nabla_z\widehat\chi^i(z,s))\big)\,dz, \quad i=1,\,2.
$$
Considering (\ref{green_est1}) we obtain the inequality $\|\widehat\Psi^1(s)\|_{L^2(\Omega)}\leq Ce^{-\nu s/2}$.  Since $\widehat\Psi^2(s)$ is $ \mathcal{F}_{\geq s/2}$~-~measurable,
we have
$$
\|\E\{\widehat\Psi^2(s)\,|\, \mathcal{F}_{\leq 0}\}\|\big._{L^2(\Omega)}= \|\E\big\{\E\{\widehat\Psi^2(s)\,|\, \mathcal{F}_{\leq s/2}\}\,|\, \mathcal{F}_{\leq 0}\big\}\|\big._{L^2(\Omega)}
$$
$$
= \|\E\big\{\E\{\widehat\Psi^2(s)\,|\, \mathcal{F}_{= s/2}\}\,|\, \mathcal{F}_{\leq 0}\big\}\|\big._{L^2(\Omega)}=
 \|\E\big\{\mathcal{R}(\xi_{s/2})\,|\, \mathcal{F}_{\leq 0}\big\}\|\big._{L^2(\Omega)};
$$
here we have used the Markov property of $\xi_\cdot$.
According to \cite[Section 2]{PV} this yields the desired inequality.
\end{proof}

\bigskip\noindent
From the last Lemma it follows that the invariance principle holds for the process $\chi_{2,1}(s)$  (see \cite[Theorem VIII.3.79]{JSh}),
that is (\ref{inv_p}) holds for any $T>0$.
The rest of proof of Theorem \ref{t_main1_dif} is exactly the same as that of Theorem \ref{t_main1}.
\end{proof}

\bigskip

\end{document}